\font\teneufm=eufm10 \font\seveneufm=eufm7 \font\fiveeufm=eufm5
\newcommand{\C}{\mathbb{C}}
\newcommand{\Z}{\mathbb{Z}}
\newcommand{\np}{\medskip\noindent}
\DeclareMathOperator{\Span}{Span} 
\DeclareMathOperator{\Hom}{Hom}
\DeclareMathOperator{\Mat}{Mat}
\DeclareMathOperator{\GL}{GL}
\DeclareMathOperator{\I}{I}
\DeclareMathOperator{\End}{End}
\numberwithin{equation}{section}
\newtheorem{definition}{Definition}[section]
\theoremstyle{remark}
\newtheorem{remark}[definition]{Remark}
\theoremstyle{plain} 
\newtheorem{theorem}[definition]{Theorem}
\newtheorem{lemma}[definition]{Lemma}
\newtheorem{corollary}[definition]{Corollary}
\newtheorem{proposition}[definition]{Proposition}
\def\Z{\mathbb Z}
\def\C{\mathbb C}
\begin{document}

\title{On $\mathcal{U}(\mathfrak{h})$-free modules over $\mathfrak{sl} (m|n)$}

\author[I. Dimitrov]{Ivan Dimitrov}
	\address{I. Dimitrov: Department of Mathematics and Statistics, Queen's University, Kingston, ON K7L 3N6, Canada}
	\email{dimitrov@queensu.ca}	
\author[K. Nguyen]{Khoa Nguyen}
	\address{K. Nguyen: Department of Mathematics and Statistics, Queen's University, Kingston, ON K7L 3N6, Canada}
	\email{k.nguyen@queensu.ca}

\begin{abstract}
We study two categories of $\mathcal{U}(\mathfrak h)$-free $\mathfrak{sl}(m|n)$-modules of total rank 2: $\mathcal{M}_{\mathfrak{sl}(m|n)}(2)$, whose objects are free of rank 2 over $\mathcal{U}(\mathfrak h)$ which are not necessarily $\mathbb Z_2$-graded, and $\mathcal{M}_{\mathfrak{sl}(m|n)}(1|1)$, whose objects are supermodules with even and odd parts each isomorphic to $\mathcal{U}(\mathfrak h)$. For $\mathfrak{sl}(m|1)$ we give a complete classification in both categories, and we prove that for $m,n\geq 2$ both categories are empty. 
\end{abstract}
\subjclass[2020]{17A70, 17B10}
	
	\keywords{Lie superalgebras, $\mathfrak{sl} (m|n)$-modules, $\mathcal U(\mathfrak{h})$-free modules}

\maketitle

\section{Introduction}
\noindent
Modules over Lie algebras and superalgebras are essential and have many applications in mathematics and physics. The study of modules over Lie algebras and superalgebras is usually divided into different categories. The first important category is the category of weight modules. In particular, such a category consists of modules that decompose into direct sums of their weight space with respect to a fixed Cartan subalgebra $\mathfrak{h}$. Weight modules over Lie superalgebras have been studied in \cite{DMP, G,G2,H} and the references therein. In contrast, $\mathcal{U}(\mathfrak{h})$-free modules provide important examples of non‑weight modules: these are modules that, when restricted to the Cartan subalgebra $\mathfrak{h}$, are free of finite rank. This class of modules was first introduced by J.~Nilsson \cite{Nil1} and, independently, by H.~Tan and K.~Zhao \cite{TZ2}, using a different approach. In particular, J.~Nilsson classified all $\mathcal{U}(\mathfrak{h})$-free modules of rank $1$ over $\mathfrak{sl}(n+1)$ and $\mathfrak{sp}(2n)$ \cite{Nil1,Nil2}. H.~Tan and K.~Zhao subsequently classified all $\mathcal{U}(\mathfrak{h})$-free modules of rank $1$ over the Witt algebras $W_n^+$ and $W_n$ \cite{TZ1}. Since then, additional families of finite-rank $\mathcal{U}(\mathfrak{h})$-free modules have been constructed for $\mathfrak{sl}(2)$ \cite{MP} and for $\mathfrak{sl}(n+1)$ \cite{GN}.    

\np
The categories of $\mathcal{U}(\mathfrak{h})$-free modules of rank $1$ over basic Lie superalgebras were first investigated by Y.~Cai and K.~Zhao \cite{CZ}. They proved that these categories are empty for all basic Lie superalgebras except $\mathfrak{osp}(1|2n)$, and they classified all $\mathcal{U}(\mathfrak{h})$-free modules of rank $1$ over $\mathfrak{osp}(1|2n)$. Since then, various families of $\mathcal{U}(\mathfrak{h})$-free modules have been constructed and studied over Lie superalgebras beyond the basic ones—for example, the super-Virasoro algebras \cite{YYX1}, the untwisted $N=2$ superconformal algebras \cite{YYX2}, the twisted $N=2$ superconformal algebra \cite{CDL}, and the topological $N=2$ super-$\mathrm{BMS}_3$ algebra \cite{LSZ}. We note that the classification in \cite{CZ} includes modules that are not $\mathbb{Z}_2$-graded: in particular, certain $\mathcal{U}(\mathfrak{osp}(1|2n))$-modules are treated purely as ungraded objects. Motivated by this, we continue in the same vein and study higher-rank $\mathcal{U}(\mathfrak{h})$-free modules over basic Lie superalgebras both in the ungraded setting and, when available, with their natural $\mathbb{Z}_2$-grading. Accordingly, these modules are treated as objects in the categories $\mathcal{M}_{\mathfrak{sl}(m|n)}(k)$ and $\mathcal{M}_{\mathfrak{sl}(m|n)}\bigl(k'|k''\bigr)$, where $k, k',k''\in \Z_{\geq 1}$. 

\np
The content of the paper is organized as follows. Section 2 collects the necessary background on $\mathfrak{sl}(m|n)$ and gives definitions of the categories $\mathcal{M}_{\mathfrak{sl}(m|n)}\bigl(k\bigr)$, $\mathcal{M}_{\mathfrak{sl}(m|n)}\bigl(k'|k''\bigr)$, and $\mathcal{M}^0_{\mathfrak{sl}(m|n)}\bigl(k'|k''\bigr)$ . Section 3 analyzes objects in $\mathcal{M}_{\mathfrak{sl}(1|1)}(2)$ and their relationship with modules over a string algebra. In particular, we show that $\mathcal{M}_{\mathfrak{sl}(1|1)}(2)$ contains exactly two isomorphism classes of objects, both of which have infinite length (Theorem \ref{mainthmsl(1|1)}). Section~4 provides a complete classification of objects in $\mathcal{M}_{\mathfrak{sl}(m|1)}(2)$, $\mathcal{M}_{\mathfrak{sl}(m|1)}(1|1)$, and
$\mathcal{M}^{0}_{\mathfrak{sl}(m|1)}(1|1)$. In particular, the isomorphism classes of objects are parameterized by ${\bf a} \in (\mathbb{C}^\times)^m$ and a subset $\mathcal{S} \subset \{1, \dots, m\}$ (Theorems \ref{classificationsl(m|1)}, Theorem \ref{classificationsl(m|1)-(1|1)-part1}, and Proposition \ref{classificationsl(m|1)-(1|1)-paritypres}). For all integers $m,n\geq 2$, Section 5 establishes that $\mathcal{M}_{\mathfrak{sl}(m|n)}(2)$ and $\mathcal{M}_{\mathfrak{sl}(m|n)}(1|1)$ are empty. 

\section{Preliminaries and Notations}
\noindent
Throughout the paper, we write $\mathbb{Z}$, $\mathbb{C}$, and $\mathbb{C}^\times$ for the sets of integers, complex numbers, and nonzero complex numbers, respectively. For $k\in\mathbb{Z}$, set
$$
\mathbb{Z}_{\geq k} := \{\, i\in\mathbb{Z} \mid i\geq k \,\}.
$$
For a Lie (super)algebra $\mathfrak{g}$, we denote its universal enveloping algebra by $\mathcal{U}(\mathfrak{g})$ and fix a Cartan subalgebra $\mathfrak{h}$. For a ring $R$, let $R^\times$ be its group of units and $\Mat_N(R)$ the ring of $N\times N$ matrices with entries in $R$. We denote by $e_i \in R^N$ the column vector with a $1$ in the $i$-th position and $0$ elsewhere. All vector spaces
(including Lie superalgebras, universal enveloping algebras, etc.) are defined over $\mathbb{C}$.
The dimension of a $\Z_2$-graded vector space $V = V_{\bar{0}} \oplus V_{\bar{1}}$ is $(\dim V_{\bar{0}}|\dim V_{\bar{1}})$.

\subsection{Basis of  $\mathfrak{sl} (m|n)$}\label{slbasis}
 We denote ${\bf{m}} := \{1, \dotsc, m\}$ and ${\bf{\bar{n}}} := \{\bar{1}, \dotsc, \bar{n}\}$. 
 For $I, J \in {\bf{m}}\, \cup \, {\bf{\bar{n}}}$, we denote by $e_{IJ}$ the $(m+n) \times (m+n)$ matrix 
 with zeros everywhere except for a $1$ in the $(I,J)$-position. 
 Set 
\begin{equation} \label{basis-h}
h_\iota := \left\{\begin{array}{ccl} e_{ii} + e_{\bar n \bar n} & \text{if} & \iota = i \in {\bf m} ,\\
e_{\bar{j}\bar{j}}+e_{mm} & \text{if} & \iota = \bar{j} \in {\bf{\bar{n}}}\setminus\{\bar{n}\} .
\end{array} \right. 
\end{equation}
The set 
$$
\{\, h_\iota,\; e_{IJ}\,|\, \iota \in \mathbf m \cup \left({\bf{\bar{n}}}\setminus\{\bar{n}\} \right)  ,\ I,J\in {\bf m}\cup{\bf{\bar{n}}},\ I\neq J \}
$$
is a basis of $\mathfrak{sl}(m|n)$. Furthermore, 
$$
\mathfrak{h} := \Span\{\, h_{\iota}
 \bigm| \iota\in \mathbf m \cup \left({\bf{\bar{n}}}\setminus\{\bar{n}\} \right) \}
$$
is a Cartan subalgebra of $\mathfrak{sl}(m|n)$. For the rest of the paper we fix the basis of $\mathfrak{sl}(m|n)$ and the Cartan subalgebra $\mathfrak{h}$ described above.

\np
Since $\mathfrak{h}$ is abelian, $\mathcal U(\mathfrak{h}) = \mathcal S(\mathfrak{h}) = \mathcal P(\mathfrak{h}^*)$ where $\mathcal{S}(-)$ and 
$\mathcal{P}(-)$ denote respectively the symmetric and polynomial algebras on a vector space.
Identifying the basis \eqref{basis-h} with its dual basis of $\mathfrak{h}^*$ and setting $\mathbf{h} := (h_1, \dots,h_m,h_{\bar{1}},\dots, h_{\overline{n-1}})$,
we identify $\mathcal{U}(\mathfrak h)$ with $\mathbb{C}[\mathbf h]$.
For $\iota \in \mathbf m \cup \left({\bf{\bar{n}}}\setminus\{\bar{n}\} \right)$, we define the automorphism $\sigma_{\iota}$ of $\mathbb{C}[{\bf h}]$ by
$$\sigma_{\iota}(h_{\beta}) := \left\{ \begin{array}{ccl}h_{\beta} -1 &\text{if} & \beta = \iota \ , \\
h_{\beta} &\text{if} & \beta \neq \iota \ . 
\end{array} \right.$$
Since $\sigma_{\iota}$ shifts the variable $h_{\iota}$ by 1, $\sigma_1, \dots, \sigma_m, \sigma_{\bar{1}}, \dots, \sigma_{\overline{n-1}}$
are pairwise commuting automorphisms. Set
$$
\Delta \;:=\; \prod_{i\in \mathbf m} \sigma_i \;=\; \sigma_1\dots\sigma_m,
\qquad \overline\Delta:= \prod_{\bar{j}\in \left({\bf{\bar{n}}}\setminus\{\bar{n}\} \right)} \sigma_{\bar{j}} = \sigma_{\bar{1}}\dots \sigma_{\overline{n-1}}.
$$

\np
Until Section \ref{section5} we study the superalgebras $\mathfrak{sl}(m|1)$ and hence 
only $\sigma_1, \dots, \sigma_m$ and $\Delta$ are considered.

\subsection{The categories $\mathcal{M}_{\mathfrak{sl}(m|n)}\bigl(k\bigr)$ and $\mathcal{M}_{\mathfrak{sl}(m|n)}\bigl(k'|k''\bigr)$}
If $\mathfrak{g}$ is a Lie superalgebra, its universal enveloping algebra $\mathcal{U}(\mathfrak{g})$ is an associative superalgebra.
Forgetting the $\Z_2$-grading on $\mathcal{U}(\mathfrak{g})$, we may view it as an associative algebra. Consequently, 
two different categories of $\mathcal{U}(\mathfrak{g})$-modules arise: $\Z_2$-graded and non-graded. These correspond 
to $\mathfrak{g}$-modules which are $\Z_2$-graded or non-graded respectively. 

\np
Let $k,k',k'' \in \mathbb{Z}_{\geq 1}$. Define $\mathcal{M}_{\mathfrak{sl}(m|n)}(k)$ and 
$\mathcal{M}_{\mathfrak{sl}(m|n)}(k'|k'')$ to be categories of $\mathcal{U}(\mathfrak{sl}(m|n))$-modules 
whose restrictions to 
$\mathcal{U}(\mathfrak h)$ are free of ranks $k$ and $(k'|k'')$ respectively. Note
that the objects of $\mathcal{M}_{\mathfrak{sl}(m|n)}(k'|k'')$ are $\Z_2$-graded while
those of $\mathcal{M}_{\mathfrak{sl}(m|n)}(k)$ are not. Respectively, the morphisms of 
$\mathcal{M}_{\mathfrak{sl}(m|n)}(k'|k'')$ are $\Z_2$-graded and those of $\mathcal{M}_{\mathfrak{sl}(m|n)}(k)$ are not.
Finally, define $\mathcal M^{0}_{\mathfrak{sl}(m|n)}(k'| k'')$ to be the category whose objects are the objects of
$\mathcal{M}_{\mathfrak{sl}(m|n)}(k'|k'')$ and in which only even homomorphisms are considered, i.e.,
$$
\Hom_{\mathcal{M}^{0}_{\mathfrak{sl}(m|n)}(k'|k'')}(U,V) =
 \bigl(\Hom_{\mathcal{M}_{\mathfrak{sl}(m|n)}(k'|k'')}(U,V)\bigr)_{\bar 0} \ 
$$
for any two objects $U$ and $V$ in $\mathcal{M}_{\mathfrak{sl}(m|n)}(k'|k'')$.

\begin{remark} \label{identificationofM}
For each $M\in \mathcal{M}_{\mathfrak{sl}(m|n)}(k)$ (resp. $M\in \mathcal{M}_{\mathfrak{sl}(m|n)}(k'|k'')$), we identify 
the $\mathcal{U}(\mathfrak{sl}(m|n))$-module $M$ with $\C[{\bf h}]^{\oplus k}$ (respectively, 
$\C[{\bf h}]^{\oplus k'} \oplus \C[{\bf h}]^{\oplus k''})$, where
the two summands are the even and the odd parts of $M$). In particular, 
for any $f_1({\bf h}),\dots,f_N({\bf h}) \in \C[\mathbf h]$ and $\iota \in \kappa$,
$$h_{\iota} \; \cdot \; \bigl[f_1({\bf h})\;\;\dots\;\;f_{N}({\bf h})\bigr]^{\mathsf T} 
\;=\; \bigl[h_{\iota}f_1({\bf h})\;\;\dots\;\;h_{\iota}f_{N}({\bf h})\bigr]^{\mathsf T}. $$
\end{remark}

\np
Cai and Zhao \cite[Proposition 2.6]{CZ} proved that, if $\mathfrak{g}$ a basic Lie superalgebra 
not isomorphic to $\mathfrak{osp}(1|2n)$, then $\mathcal{M}_{\mathfrak{g}}(1) = \emptyset$.
For $\mathfrak{g} \simeq \mathfrak{osp}(1|2n)$ they classified the objects of $\mathcal{M}_{\mathfrak{g}}(1)$.
Naturally, none of these modules admits a $\Z_2$-grading. In this paper we study the categories of modules of rank 2, 
i.e., the categories $\mathcal{M}_{\mathfrak{g}}(2)$ and $\mathcal{M}_{\mathfrak{g}}(1|1)$ when 
$\mathfrak{g} \simeq \mathfrak{sl}(m|n)$.

\section{The category $\mathcal{M}_{\mathfrak{sl} (1|1)}(2)$}
\noindent
The main goal of this section is to study the category $\mathcal{M}_{\mathfrak{sl}(1|1)}(2)$ and to classify all of its objects up to isomorphism. For convenience, we fix a basis $\{x,y,h\}$ of $\mathfrak{sl}(1|1)$, where
$$
x := e_{1\bar{1}}, \quad y := e_{\bar{1}1}, \quad h := e_{11} + e_{\bar{1}\bar{1}}.
$$ 

\subsection{The description of $\mathcal{M}_{\mathfrak{sl} (1|1)}(2)$}
 For each $M \in \mathcal{M}_{\mathfrak{sl}(1|1)}(2)$, $M = \mathbb{C}[h]^{\oplus 2},$
and the $\mathcal{U}(\mathfrak{sl}(1|1))$–module structure is as stated in the lemma below.

\begin{proposition}\label{structure_sl(1|1)}
\begin{enumerate}
\item[(i)]
Let $P,Q \in \Mat_2\bigl(\mathbb{C}[h]\bigr)$ satisfy
\begin{equation} \label{eq-sl(1|1)}
    P^2 = Q^2 = 0, \qquad PQ + QP = h \I_2.
\end{equation}
Define $M(P,Q):=\C[h]^{\oplus 2}$ with the following $\mathcal{U}(\mathfrak{sl}(1|1))$–action:
$$
x \cdot \begin{bmatrix} f_1(h) \\ f_2(h) \end{bmatrix} = P \begin{bmatrix} f_1(h) \\ f_2(h) \end{bmatrix},
\qquad
y \cdot \begin{bmatrix} f_1(h) \\ f_2(h) \end{bmatrix} = Q \begin{bmatrix} f_1(h) \\ f_2(h) \end{bmatrix}.
$$
Then $M(P,Q)$ is an $\mathcal{U}(\mathfrak{sl}(1|1))$–module. 
\item[]
\item[(ii)] If $M \in \mathcal{M}_{\mathfrak{sl}(1|1)}(2)$, then $M \simeq M(P,Q)$ for some $P,Q$ satisfying \eqref{eq-sl(1|1)}.
\item[]
\item[(iii)] $M(P,Q) \simeq M(P',Q')$ if and only if there exists $W \in \GL_2(\mathbb{C}[h])$ such that
$$
P' = W^{-1} P\, W,\qquad Q' = W^{-1} Q\, W.
$$
\end{enumerate}
\end{proposition}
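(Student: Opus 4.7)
The plan is to exploit the fact that in $\mathfrak{sl}(1|1)$ the Cartan element $h$ is central. A direct matrix computation shows $[h,x] = [h,y] = 0$, so the only nontrivial defining relations (beyond centrality of $h$) are $x^2 = 0$, $y^2 = 0$, and $\{x,y\} = xy + yx = h$. Once this observation is recorded, each of the three parts becomes a routine translation between module-theoretic data and matrix data.

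For part (i), I would simply verify the relations: on $\C[h]^{\oplus 2}$ the matrices $P$ and $Q$ automatically commute with multiplication by the scalar $h$ because $h\I_2$ is central in $\Mat_2(\C[h])$, so $[h,x]$ and $[h,y]$ act as zero. The remaining relations $x^2 = y^2 = 0$ and $xy + yx = h$ correspond exactly to the hypotheses $P^2 = Q^2 = 0$ and $PQ + QP = h\I_2$, so $M(P,Q)$ is a well-defined $\mathcal U(\mathfrak{sl}(1|1))$-module.

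For part (ii), let $M \in \mathcal M_{\mathfrak{sl}(1|1)}(2)$. Using Remark \ref{identificationofM} I identify $M$ with $\C[h]^{\oplus 2}$ so that $h$ acts as multiplication by the variable $h$. Centrality of $h$ in $\mathfrak{sl}(1|1)$ forces the endomorphisms of $M$ induced by $x$ and $y$ to commute with multiplication by $h$, hence they are $\C[h]$-linear and thus given by matrices $P, Q \in \Mat_2(\C[h])$. The relations satisfied by $x$ and $y$ in $\mathcal U(\mathfrak{sl}(1|1))$ immediately yield \eqref{eq-sl(1|1)}.

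For part (iii), any isomorphism $\varphi : M(P,Q) \to M(P',Q')$ is in particular $\mathcal U(\mathfrak h)$-linear, so it is left multiplication by some $W \in \GL_2(\C[h])$; conversely, any such $W$ is $\mathcal U(\mathfrak h)$-linear. The conditions that $\varphi$ intertwines the actions of $x$ and $y$ then translate into $WP' = PW$ and $WQ' = QW$, which rearrange to the stated conjugation formulas. I do not anticipate any real obstacle: the only point requiring a moment's care is that the identification of $M$ with $\C[h]^{\oplus 2}$ is canonical only up to the action of $\GL_2(\C[h])$, which is precisely the ambiguity reflected in (iii).
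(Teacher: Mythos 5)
Your proposal is correct and follows essentially the same approach as the paper: all three parts rest on the centrality of $h$ in $\mathfrak{sl}(1|1)$, which makes the actions of $x$ and $y$ (and any module homomorphism) $\C[h]$-linear and hence given by matrices over $\C[h]$, after which the defining relations translate directly into \eqref{eq-sl(1|1)} and the conjugation condition in (iii). The paper's own proof is just a terser version of the same argument, extracting $P$ and $Q$ as $\bigl[x\cdot e_1 \;\; x\cdot e_2\bigr]$ and $\bigl[y\cdot e_1 \;\; y\cdot e_2\bigr]$.
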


\begin{proof}
\begin{itemize}
    \item[(i)]
The fact that $M(P,Q)$ is an $\mathcal{U}(\mathfrak{sl}(1|1))$-module follows from the commutation relations in
$\mathfrak{sl}(1|1)$ and \eqref{eq-sl(1|1)}.

\item[]

\item[(ii)]
If $M \in \mathcal{M}_{\mathfrak{sl}(1|1)}(2)$, set
$P:=\begin{bmatrix} x\cdot e_1 & x\cdot e_2 \end{bmatrix}$
and $Q := \begin{bmatrix} y \cdot e_1 & y \cdot e_2 \end{bmatrix}$.
Then $P$ and $Q$ satisfy \eqref{eq-sl(1|1)} because $x^2 = y^2 = 0$ and $h = [x,y] = xy + yx$ in 
$\mathcal{U}(\mathfrak{sl}(1|1))$.
Since $x h = h x$, we have $x\,f(h)=f(h)\,x$ for all $f(h)\in\C[h]$. Therefore,
$$
x \cdot \begin{bmatrix} f_1(h) \\ f_2(h) \end{bmatrix}
= f_1(h) (x \cdot e_1) + f_2(h)(x \cdot e_2)
= \begin{bmatrix} x \cdot e_1 & x \cdot e_2 \end{bmatrix}
\begin{bmatrix} f_1(h) \\ f_2(h) \end{bmatrix} = P \begin{bmatrix} f_1(h) \\ f_2(h) \end{bmatrix}.
$$
Similarly,
$$
y \cdot \begin{bmatrix} f_1(h) \\ f_2(h) \end{bmatrix}
= Q \begin{bmatrix} f_1(h) \\ f_2(h) \end{bmatrix},
$$
proving that $M \simeq M(P,Q)$.

\item[]

\item[(iii)] This follows from the fact that $[h,x] = [h,y] = 0$. \qedhere
\end{itemize}
\end{proof}

\np
\begin{theorem} \label{mainthmsl(1|1)}
Let $M \in \mathcal{M}_{\mathfrak{sl}(1|1)}(2)$. Then either
$$
M \simeq M\left( \begin{bmatrix} 0 & 1 \\ 0 & 0 \end{bmatrix}, \begin{bmatrix} 0 & 0 \\ h & 0 \end{bmatrix} \right),
\quad \text{or} \quad
M \simeq M\left( \begin{bmatrix} 0 & h \\ 0 & 0 \end{bmatrix}, \begin{bmatrix} 0 & 0 \\ 1 & 0 \end{bmatrix} \right).
$$
Moreover, 
$$
M\left( \begin{bmatrix} 0 & 1 \\ 0 & 0 \end{bmatrix}, \begin{bmatrix} 0 & 0 \\ h & 0 \end{bmatrix} \right) \not \simeq
 M\left( \begin{bmatrix} 0 & h \\ 0 & 0 \end{bmatrix}, \begin{bmatrix} 0 & 0 \\ 1 & 0 \end{bmatrix} \right).
$$
\end{theorem}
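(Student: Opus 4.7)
The plan is to use Proposition \ref{structure_sl(1|1)} to reduce the classification to finding canonical forms of pairs $(P,Q) \in \Mat_2(\C[h])$ satisfying \eqref{eq-sl(1|1)} under the simultaneous $\GL_2(\C[h])$-conjugation action, and then to verify the non-isomorphism by a direct matrix computation. The two cases in the classification will correspond to the two monic divisors of $h$ in $\C[h]$.

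First I would put $P$ into triangular normal form. The relation $PQ+QP = h\I_2 \neq 0$ forces $P \neq 0$, while $P^2 = 0$ gives $\ker P \supseteq \im P$ in $\C[h]^2$. Since $\C[h]$ is a PID and $\ker P$ is saturated (being the kernel of a $\C[h]$-linear map into a torsion-free module), $\ker P$ is a rank-$1$ direct summand of $\C[h]^2$, generated by some unimodular vector that can be extended to a basis of $\C[h]^2$. In such a basis $P$ takes the form $\begin{bmatrix} 0 & c(h) \\ 0 & 0 \end{bmatrix}$ with $c \neq 0$, and by further rescaling this basis I may assume $c$ is monic. Writing $Q = \begin{bmatrix} p & q \\ r & s \end{bmatrix}$ and expanding $PQ + QP = h\I_2$ then yields $s = -p$ and $cr = h$, forcing $c \in \{1, h\}$.

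To finish the classification I would clear the diagonal of $Q$ using the remaining stabilizer of $P$, which is readily seen to be the set of matrices $\begin{bmatrix} a & b \\ 0 & a \end{bmatrix}$ with $a \in \C^\times$ and $b \in \C[h]$. The main obstacle is exactly here: one must exploit the constraint $p^2 + qr = 0$ coming from $Q^2 = 0$ to show that the polynomial parameter $b$ in this stabilizer suffices to kill both diagonal entries of $Q$ at once. In the case $c = 1$ (so $r = h$), the identity $p^2 + hq = 0$ combined with the primality of $h$ in $\C[h]$ gives $h \mid p$; writing $p = h p'$ and taking $b = a p'$ produces the first normal form $Q = \begin{bmatrix} 0 & 0 \\ h & 0 \end{bmatrix}$. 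The case $c = h$ (so $r = 1$, $q = -p^2$) is analogous with $b = a p$, producing the second normal form. Finally, for the non-isomorphism, I would write out the intertwining equations $WP' = PW$ and $WQ' = QW$ for $W = \begin{bmatrix} \alpha & \beta \\ \gamma & \delta \end{bmatrix} \in \GL_2(\C[h])$; these force $\beta = \gamma = 0$ and $\delta = h\alpha$, so $\det W = h\alpha^2$ is never a unit in $\C[h]$ and no such $W$ exists.
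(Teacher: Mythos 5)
Your proposal is correct and follows essentially the same route as the paper: normalize $P$ to the strictly upper-triangular form $\begin{bmatrix} 0 & c \\ 0 & 0\end{bmatrix}$, use $PQ+QP=h\I_2$ to force $c\in\{1,h\}$ and $\tr Q=0$, and then clear $Q$ by a change of basis in the stabilizer of $P$ (the paper conjugates by an explicit $W$ depending on the parameter $v(h)$, which amounts to the same computation). Your explicit intertwiner calculation showing $\det W = h\alpha^2$ is not a unit is in fact more detailed than the paper's proof of the final claim, which merely asserts that the two pairs lie in distinct $\GL_2(\C[h])$-orbits.
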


\begin{proof} 
Let $M \in \mathcal{M}_{\mathfrak{sl} (1|1)}(2)$. By Proposition \ref{structure_sl(1|1)}(i), $M \simeq M(P,Q)$.
Since $P^2 =0$, $P$ is conjugate to a 
nilpotent Jordan block 
$\begin{bmatrix}
0 & f(h)\\
0 & 0
\end{bmatrix},$ where $f(h)\in\C[h]$. From  $PQ + QP \;=\; h\,\I_2$, we see that $P \neq 0$ and thus 
$f(h) \neq 0$. By Proposition \ref{structure_sl(1|1)}(iii), we may assume that $P=\begin{bmatrix}
0 & f(h)\\
0 & 0
\end{bmatrix}$ with $f(h)\in\C[h]\setminus\{0\}$. Since $Q^{2}=0$, the matrix $Q$ can be written as
$$Q=\begin{bmatrix} q_{1}(h) & q_{2}(h)\\ q_{3}(h) & -q_{1}(h) \end{bmatrix},\ \ q_i(h)\in\C[h],\ \ \text{and}\ \ q_{2}(h)\,q_{3}(h)=-q_{1}(h)^{2}.$$

\np
Moreover,  the relation $PQ+QP=h\,\I_{2}$ forces $f(h)\,q_{3}(h)=h$.

\np
\textbf{Case 1:} Suppose $f(h)=\alpha \in\mathbb{C}^\times$, so $q_3(h)=\alpha^{-1}h$. Then
$$
P(h)=\begin{bmatrix}0 & \alpha\\0 & 0\end{bmatrix},\qquad Q(h)=\begin{bmatrix}h\,v(h) & -\alpha\,h\,v(h)^2\\\alpha^{-1}h & -h\,v(h)\end{bmatrix},\qquad v(h)\in\C[h].
$$

\np
Let $W:=\begin{bmatrix} 1 & v(h) \\[2pt] 0 & \alpha^{-1} \end{bmatrix} \in \GL_2(\C[h])$. A direct computation yields
$$
W^{-1}
\begin{bmatrix} h v(h) & -\alpha h v(h)^2 \\[2pt] \alpha^{-1} h & - h v(h) \end{bmatrix}
W
=
\begin{bmatrix} 0 & 0 \\ h & 0 \end{bmatrix},
\qquad
W^{-1}
\begin{bmatrix} 0 & \alpha \\ 0 & 0 \end{bmatrix}
W
=
\begin{bmatrix} 0 & 1 \\ 0 & 0 \end{bmatrix}.
$$

\np
Therefore,
$$
M\left(
\begin{bmatrix} 0 & \alpha \\ 0 & 0 \end{bmatrix},
\begin{bmatrix} h v(h) & -\alpha h v(h)^2 \\ \alpha^{-1} h & - h v(h) \end{bmatrix}
\right)\;\simeq\;M\left(\begin{bmatrix} 0 & 1 \\ 0 & 0 \end{bmatrix},\begin{bmatrix} 0 & 0 \\ h & 0 \end{bmatrix}
\right).
$$

\np
\textbf{Case 2:} Suppose $f(h) = \alpha h$ with $\alpha\in\mathbb{C}^\times$, so $q_3(h) = \alpha^{-1}$. Then
$$
P(h) = \begin{bmatrix} 0 & \alpha h \\ 0 & 0 \end{bmatrix}, \quad Q(h) = \begin{bmatrix} v(h) & -\alpha v(h)^2 \\ \alpha^{-1} & -v(h) \end{bmatrix},\qquad v(h)\in\C[h].
$$
Since
$$
W^{-1}
\begin{bmatrix} v(h) & -\alpha v(h)^2 \\ \frac{1}{\alpha} & -v(h) \end{bmatrix}
W = \begin{bmatrix} 0 & 0 \\ 1 & 0 \end{bmatrix},\qquad
W^{-1}\begin{bmatrix} 0 & \alpha h \\ 0 & 0 \end{bmatrix} W = \begin{bmatrix} 0 & h \\ 0 & 0 \end{bmatrix},
$$

\np
we deduce that
$$
M\left( \begin{bmatrix} 0 & \alpha h \\ 0 & 0 \end{bmatrix},
         \begin{bmatrix} v(h) & -\alpha v(h)^2 \\ \alpha^{-1} & -v(h) \end{bmatrix}
\right)
\;\simeq\;
M\left( \begin{bmatrix} 0 & h \\ 0 & 0 \end{bmatrix},
         \begin{bmatrix} 0 & 0 \\ 1 & 0 \end{bmatrix}
\right).
$$

\np
Finally, the pairs
$$
\left(
\begin{bmatrix} 0 & h \\ 0 & 0 \end{bmatrix},
\begin{bmatrix} 0 & 0 \\ 1 & 0 \end{bmatrix}
\right)
\quad\text{and}\quad
\left(
\begin{bmatrix} 0 & 1 \\ 0 & 0 \end{bmatrix},
\begin{bmatrix} 0 & 0 \\ h & 0 \end{bmatrix}
\right)
$$
lie in distinct $\GL_2(\C[h])$-orbits under conjugation, proving that
$$
M\left( \begin{bmatrix} 0 & 1 \\ 0 & 0 \end{bmatrix}, \begin{bmatrix} 0 & 0 \\ h & 0 \end{bmatrix} \right) \not \simeq
 M\left( \begin{bmatrix} 0 & h \\ 0 & 0 \end{bmatrix}, \begin{bmatrix} 0 & 0 \\ 1 & 0 \end{bmatrix} \right). \hfill \qedhere
$$
\end{proof}

\np 
\begin{proposition}
Every $M\in\mathcal{M}_{\mathfrak{sl}(1|1)}(2)$ has infinite length. More precisely,
each submodule of $M=M\!\left(\begin{bmatrix}0&1\\0&0\end{bmatrix},\begin{bmatrix}0&0\\ h&0\end{bmatrix}\right)$
(respectively, $M\left( \begin{bmatrix} 0 & h \\ 0 & 0 \end{bmatrix}, \begin{bmatrix} 0 & 0 \\ 1 & 0 \end{bmatrix} \right)$)
equals $J\oplus J$ or $J \oplus h J$ (respectively, $J \oplus J$ or $hJ \oplus J$) for some ideal $J \subset \C[h]$.
\end{proposition}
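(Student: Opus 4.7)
The plan is to classify the submodules of each representative module from Theorem~\ref{mainthmsl(1|1)} by attaching four ideals of $\C[h]$ to any submodule and showing they are forced into a very restricted pattern. Focus on $M_1:=M\!\left(\begin{bmatrix}0&1\\0&0\end{bmatrix},\begin{bmatrix}0&0\\ h&0\end{bmatrix}\right)$; on $v=[f_1,f_2]^{\mathsf T}$ the generators act by $x\cdot v=[f_2,0]^{\mathsf T}$ and $y\cdot v=[0,hf_1]^{\mathsf T}$. For a $\mathcal U(\mathfrak{sl}(1|1))$-submodule $N\subset M_1$, define
\[
J_1:=\pi_1(N),\ \ J_2:=\pi_2(N),\ \ A:=\{f\in\C[h]\,:\,[f,0]^{\mathsf T}\in N\},\ \ B:=\{g\in\C[h]\,:\,[0,g]^{\mathsf T}\in N\},
\]
with $\pi_i$ the $i$-th coordinate projection. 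Each of these is an ideal of $\C[h]$ because $N$ is stable under multiplication by $h$.

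The first step is to derive the chain
\[
hJ_1\;\subset\; B\;\subset\; J_2\;\subset\; A\;\subset\; J_1,
\]
where the outer inclusions are trivial and the middle two follow from the $y$- and $x$-action formulas respectively. Since $\C[h]$ is a PID, write $J_1=(p)$; the quotient $(p)/(hp)\cong\C[h]/(h)\cong\C$ is one-dimensional, so each of $A$, $J_2$, $B$ is either $(p)$ or $(hp)$.

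The second step rules out $A=(hp)$. Pick $[p,r]^{\mathsf T}\in N$ witnessing $p\in J_1$; then $r\in A=(hp)$, and since $(hp)=hJ_1\subset B$ is automatic, $r\in B$, so $[0,r]^{\mathsf T}\in N$ and subtracting gives $[p,0]^{\mathsf T}\in N$, forcing $p\in A=(hp)$, a contradiction. Hence $A=(p)$, so $[p,0]^{\mathsf T}\in N$ and every element $[pg,r]^{\mathsf T}\in N$ reduces modulo $g\cdot[p,0]^{\mathsf T}$ to $[0,r]^{\mathsf T}\in N$, whence $r\in B$. This shows $N=J_1\oplus B$, and setting $J:=(p)$ the two remaining possibilities $B=(p)$ or $B=(hp)$ produce $N=J\oplus J$ or $N=J\oplus hJ$. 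The argument for $M_2$ is symmetric with $x$ and $y$ interchanged: one now finds $hJ_2\subset A\subset J_1\subset B\subset J_2$, rules out $B=(hq)$ where $J_2=(q)$, and concludes that $N=J\oplus J$ or $N=hJ\oplus J$ with $J:=J_2$.

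Finally, infinite length is immediate from the classification: for any fixed $\lambda\in\C$, the submodules $((h-\lambda)^k)\oplus((h-\lambda)^k)$, $k\ge 0$, form a strictly descending chain. The main technical point is the cancellation step that excludes $A=(hp)$ (or $B=(hq)$ in the $M_2$ case); the rest is bookkeeping once the chain of inclusions is in place, and the key observation is that $hJ_1\subset B$ already, so the hypothetical possibility $A=(hp)$ automatically lands inside $B$ and triggers a cancellation that collapses the branch.
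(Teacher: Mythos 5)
Your proof is correct, but it takes a genuinely different route from the paper. The paper first observes that, $\C[h]$ being a PID, a nonzero submodule $N$ is $\mathcal U(\mathfrak h)$-free, invokes the Cai--Zhao result that rank-$1$ objects do not exist to force $N\in\mathcal{M}_{\mathfrak{sl}(1|1)}(2)$, identifies $N$ up to isomorphism via Theorem~\ref{mainthmsl(1|1)}, and then computes the intertwining matrix $W(h)$ of the inclusion $N\hookrightarrow M$, finding $W(h)=\diag(f,f)$ or $\diag(f,hf)$ (resp.\ $\diag(f,hf)$ read the other way for $M_2$). Your argument instead classifies submodules directly: the chain $hJ_1\subset B\subset J_2\subset A\subset J_1$ is correct (the two nontrivial inclusions do follow from $y\cdot[f_1,f_2]^{\mathsf T}=[0,hf_1]^{\mathsf T}$ and $x\cdot[f_1,f_2]^{\mathsf T}=[f_2,0]^{\mathsf T}$), the cancellation step excluding $A=(hp)$ is valid since that hypothesis collapses $B=J_2=A=(hp)$ and then $[p,0]^{\mathsf T}\in N$ gives $p\in(hp)$, and the reduction of an arbitrary element modulo $g\cdot[p,0]^{\mathsf T}$ correctly yields $N=J_1\oplus B$. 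What your approach buys is self-containedness: it needs neither the external rank-$1$ emptiness result nor the classification theorem, and it exhibits the full submodule lattice explicitly. What the paper's approach buys is brevity given machinery already in place, and the intertwiner computation it performs is the same one reused later for endomorphism calculations. The only (cosmetic) omission is the degenerate case $J_1=0$, where $(p)/(hp)\cong\C$ does not apply; but then the chain forces $N=0$, which is covered by $J=0$.
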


\begin{proof}
 Let $N$ be a submodule of $M\left( \begin{bmatrix} 0 & 1 \\ 0 & 0 \end{bmatrix}, \begin{bmatrix} 0 & 0 \\ h & 0 \end{bmatrix} \right)$. Since $\C[h]$ is a PID, $N$ is a $\mathcal U(\mathfrak h)$-free module. By
 \cite[Proposition 2.6]{CZ}, $N$ is not of rank 1. Assume $N\neq 0$, then $N \in \mathcal{M}_{\mathfrak{sl}(1|1)}(2)$.
Hence, by Theorem~\ref{mainthmsl(1|1)},
$$
N \simeq M\left( \begin{bmatrix} 0 & 1 \\ 0 & 0 \end{bmatrix}, \begin{bmatrix} 0 & 0 \\ h & 0 \end{bmatrix} \right)
\quad \text{or} \quad
N \simeq M\left( \begin{bmatrix} 0 & h \\ 0 & 0 \end{bmatrix}, \begin{bmatrix} 0 & 0 \\ 1 & 0 \end{bmatrix} \right).
$$

\np
The inclusion $N \subset M$ gives rise to a $\mathcal U(\mathfrak{sl}(1|1))$-homomorphism
$$\Phi: M\left( \begin{bmatrix} 0 & 1 \\ 0 & 0 \end{bmatrix}, \begin{bmatrix} 0 & 0 \\ h & 0 \end{bmatrix} \right) \to M \quad
\text{or}\quad 
\Phi: M\left( \begin{bmatrix} 0 & h \\ 0 & 0 \end{bmatrix}, \begin{bmatrix} 0 & 0 \\ 1 & 0 \end{bmatrix} \right) \to M.$$
 Then there exists
$$
W(h)=\begin{bmatrix} w_1(h) & w_2(h)\\ w_3(h) & w_4(h)\end{bmatrix}\in \Mat_2(\C[h])
$$
such that, for all $\mathbf f(h) \in \C[h]^{\oplus2}$, $\Phi(\mathbf f(h))=W(h)\,\mathbf f(h)$. Since $\Phi$ is a $\mathcal{U}(\mathfrak{sl}(1|1))$-homomorphism,
\begin{equation}\label{intertwiningrelations}
x\cdot\Phi(\mathbf f(h))=\Phi(x\cdot \mathbf f(h)),\qquad
y\cdot\Phi(\mathbf f(h))=\Phi(y\cdot \mathbf f(h))\quad\text{for all}\;\;\;\mathbf f(h)\in \C[h]^{\oplus2} .
\end{equation}

\np
If $N \,\simeq\, 
M\left(
\begin{bmatrix} 0 & 1 \\ 0 & 0 \end{bmatrix},
\begin{bmatrix} 0 & 0 \\ h & 0 \end{bmatrix}
\right)$, then \eqref{intertwiningrelations} yields
$$
W(h)\!\begin{bmatrix} 0 & 1 \\ 0 & 0 \end{bmatrix}
=\begin{bmatrix} 0 & 1 \\ 0 & 0 \end{bmatrix}\! W(h),
\qquad
W(h)\!\begin{bmatrix} 0 & 0 \\ h & 0 \end{bmatrix}
=\begin{bmatrix} 0 & 0 \\ h & 0 \end{bmatrix}\! W(h).
$$
\np
Therefore, $W(h) = \begin{bmatrix} f(h) & 0 \\ 0 & f(h) \end{bmatrix}$, for some $f(h) \in \mathbb{C}[h]$.

\np 
If $N \simeq M\left(\begin{bmatrix} 0 & h\\ 0&0 \end{bmatrix}, \begin{bmatrix} 0 & 0\\ 1&0 \end{bmatrix}\right),$ then \eqref{intertwiningrelations} imply
$$W(h) \begin{bmatrix} 0 & h\\ 0 & 0 \end{bmatrix} = \begin{bmatrix} 0 & 1\\ 0 & 0 \end{bmatrix} W(h), \quad W(h) \begin{bmatrix} 0 & 0\\ 1 & 0 \end{bmatrix} = \begin{bmatrix} 0 & 0\\ h & 0 \end{bmatrix} W(h).$$ 

\np
Hence, $W(h) = \begin{bmatrix} f(h) & 0 \\ 0 &hf(h) \end{bmatrix}$, for some $f(h) \in \C[h].$ 

\np 
Submodules of $M\left( \begin{bmatrix} 0 & h \\ 0 & 0 \end{bmatrix}, \begin{bmatrix} 0 & 0 \\ 1 & 0 \end{bmatrix} \right)$ are handled analogously.
\end{proof}

\subsection{Connections between objects in $\mathcal{M}_{\mathfrak{sl}(1|1)}(2)$ and modules over a string algebra}
Consider the one-vertex quiver $Q$: 
$$
\begin{tikzcd}
\bullet
  \arrow[out=-30,  in= 30,  loop, swap, "x"]
  \arrow[out=150, in=210, loop, swap, "y"]
\end{tikzcd}
$$
with two loops $x$, $y$, and let $\C Q$ be its path algebra. Then
$$ \mathcal{U}(\mathfrak{sl}(1|1)) \simeq \C Q / \rho,$$
where $\rho$ is the left ideal generated by $\{x^2, y^2\}$. 

\np
The algebra $\mathbb{C}Q / \rho$ is a \emph{string algebra} (see \cite{C}). Note that
the center of $\mathbb{C}Q / \rho$ is $\C[h]$ where $h := xy + yx$. As a consequence, the objects of $\mathcal{M}_{\mathfrak{sl}(1|1)}(2)$ can therefore be viewed as $\C Q/\rho$–modules that are free of rank~$2$ over its center.

\np
The \emph{string modules} $M_1$ and $M_2$ are defined as follows: $M_1$ and $M_2$ 
have the same underlying vector space
with a basis $\{u_i \mid i \in \mathbb{Z}_{\geq 1}\}$ and the non-zero actions of the generators $x$ and $y$ on
the basis vectors are depicted as
\begin{itemize}
    \item $M_1$:
    \begin{tikzcd}
    u_1 \arrow[r, "x"] &
    u_2 \arrow[r, "y"] &
    u_3 \arrow[r, "x"] &
    u_4 \arrow[r, "y"] &
    u_5 \arrow[r, "x"] &
    \cdots
    \end{tikzcd}
    
    \item $M_2$:
    \begin{tikzcd}
    u_1 \arrow[r, "y"] &
    u_2 \arrow[r, "x"] &
    u_3 \arrow[r, "y"] &
    u_4 \arrow[r, "x"] &
    u_5 \arrow[r, "y"] &
    \cdots \ .
    \end{tikzcd}
\end{itemize}

\begin{lemma}
The following isomorphisms hold:
    $$
    M\left( \begin{bmatrix} 0 & 1 \\ 0 & 0 \end{bmatrix}, 
             \begin{bmatrix} 0 & 0 \\ h & 0 \end{bmatrix} \right)
    \;\simeq\; M_1, \qquad
    M\left( \begin{bmatrix} 0 & h \\ 0 & 0 \end{bmatrix}, 
             \begin{bmatrix} 0 & 0 \\ 1 & 0 \end{bmatrix} \right)
    \;\simeq\; M_2.
    $$
\end{lemma}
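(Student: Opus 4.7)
The plan is to construct explicit $\C$-linear bijections $\phi_1\colon M_1\to M(P_1,Q_1)$ and $\phi_2\colon M_2\to M(P_2,Q_2)$, where $(P_i,Q_i)$ are the two pairs of matrices appearing in the lemma, and then verify that they intertwine the actions of $x$ and $y$. Since $\mathcal{U}(\mathfrak{sl}(1|1))\simeq \C Q/\rho$ as recalled just before the lemma, both sides carry structures over the same associative algebra and the comparison is well-posed.

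For the first isomorphism I would begin by reading off the action on the standard generators of $M(P_1,Q_1)$: $x\cdot e_1=0$, $x\cdot e_2=e_1$, $y\cdot e_1=h\,e_2$, $y\cdot e_2=0$. Alternately applying $x$ and $y$ starting from $e_2$ therefore produces the sequence
$$e_2,\; e_1,\; h\,e_2,\; h\,e_1,\; h^2 e_2,\; h^2 e_1,\; \ldots,$$
which is clearly a $\C$-basis of $M(P_1,Q_1)=\C[h]^{\oplus 2}$. Setting $\phi_1(u_{2k+1}):=h^k e_2$ and $\phi_1(u_{2k+2}):=h^k e_1$ for $k\geq 0$ then yields a $\C$-linear bijection by construction.

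The remaining task is to check that $\phi_1$ commutes with the actions of $x$ and $y$. This splits into the two nonzero string relations $x\cdot u_{2k+1}=u_{2k+2}$ and $y\cdot u_{2k+2}=u_{2k+3}$ and the two vanishing relations $y\cdot u_{2k+1}=0$ and $x\cdot u_{2k+2}=0$. Using that $x$ and $y$ commute with multiplication by $h$ on $M(P_1,Q_1)$ (since $[h,x]=[h,y]=0$), each of these translates directly into the elementary identities already recorded above. None of the steps is difficult; if there is any obstacle at all it is the bookkeeping of matching the string pattern of $M_1$ (which alternates $x,y,x,y,\ldots$) to the alternating basis chain on the module side.

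For the second isomorphism I would proceed symmetrically. The actions on $M(P_2,Q_2)$ are $x\cdot e_1=0$, $x\cdot e_2=h\,e_1$, $y\cdot e_1=e_2$, $y\cdot e_2=0$, so iterating $y,x,y,x,\ldots$ from $e_1$ produces the $\C$-basis $e_1,\,e_2,\,h\,e_1,\,h\,e_2,\ldots$ of $M(P_2,Q_2)$. The assignment $\phi_2(u_{2k+1}):=h^k e_1$, $\phi_2(u_{2k+2}):=h^k e_2$ then delivers the desired isomorphism by an identical verification.
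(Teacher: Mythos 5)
Your proposal is correct and is essentially the same as the paper's proof: the paper defines the maps $\Phi_1,\Phi_2$ sending $h^i e_2\mapsto u_{2i+1}$, $h^i e_1\mapsto u_{2i+2}$ (resp. $h^i e_1\mapsto u_{2i+1}$, $h^i e_2\mapsto u_{2i+2}$), which are precisely the inverses of your $\phi_1,\phi_2$. Your version merely writes the bijection in the opposite direction and spells out the intertwining checks that the paper leaves to the reader.
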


\begin{proof}
    The maps:
\[\begin{array}{ccl}
    \Phi_1 &:& M\left( \begin{bmatrix} 0 & 1 \\ 0 & 0 \end{bmatrix}, 
             \begin{bmatrix} 0 & 0 \\ h & 0 \end{bmatrix} \right)
    \to M_1\\ &&\\
    && \begin{bmatrix} 0 \\ h^i \end{bmatrix} \mapsto u_{2i+1}, \quad \begin{bmatrix} h^i \\ 0 \end{bmatrix} \mapsto u_{2i+2}\quad\text{for all}\quad i \in \Z_{\geq 0}, \\ && \end{array} \]
\[\begin{array}{ccl}    
    \Phi_2&:& M\left( \begin{bmatrix} 0 & h \\ 0 & 0 \end{bmatrix}, 
             \begin{bmatrix} 0 & 0 \\ 1 & 0 \end{bmatrix} \right)
    \to M_2\\ &&\\
  &&  \begin{bmatrix} h^i \\ 0 \end{bmatrix} \mapsto u_{2i+1}, \quad \begin{bmatrix} 0 \\ h^i \end{bmatrix} \mapsto u_{2i+2}\quad\text{for all}\quad i \in \Z_{\geq 0}, \\ &&
    \end{array} \] 
define the desired isomorphisms.    
\end{proof}

\section{The categories $\mathcal{M}_{\mathfrak{sl} (m|1)}(2)$ and $\mathcal{M}_{\mathfrak{sl}(m|1)}\bigl(1|1\bigr)$} \label{section4}
\np
In this section we classify, up to isomorphism, all objects in the categories $\mathcal{M}_{\mathfrak{sl}(m|1)}(2)$, $\mathcal{M}_{\mathfrak{sl}(m|1)}\bigl(1|1\bigr)$, and $\mathcal{M}^0_{\mathfrak{sl}(m|1)}\bigl(1|1\bigr)$, where $m \geq 2$. Throughout, $\mathbb{C}[\mathbf h]:=\mathbb{C}[h_1,\dots,h_{m}]$. As stated in Remark~\ref{identificationofM}, we regard any object of $\mathcal{M}_{\mathfrak{sl}(m|1)}(2)$ (and likewise of $\mathcal{M}_{\mathfrak{sl}(m|1)}\bigl(1|1\bigr)$ and $\mathcal{M}^0_{\mathfrak{sl}(m|1)}\bigl(1|1\bigr)$) as the vector space $\C[{\bf h}]^{\oplus 2}$.

\begin{proposition} \label{structuresl(m|1)}
Let $M = \C[{\bf h}]^{\oplus 2} \in \mathcal{M}_{\mathfrak{sl} (m|1)}(2)$. For $I \neq J \in {\bf m}\,\cup\,\{\bar{1}\}$, set 
$$E_{IJ} := \bigl[\, e_{IJ}\!\cdot e_1 \;\;\; e_{IJ}\!\cdot e_2 \,\bigr] \in \Mat_2\bigl(\mathbb{C}[{\bf h}]\bigr).$$
Then, for any $\mathbf {f}({\bf{h}}) \in \C[\mathbf h]^{\oplus 2}$ and any $i \neq j \in \mathbf{m}$, we have 
$$\begin{array}{ccl}
e_{ij}\cdot \mathbf {f}({\bf{h}}) &=& E_{ij}({\bf h})\,\sigma_i\sigma_j^{-1}\left(\mathbf {f}({\bf{h}})\right)  ,\\
e_{i\bar 1}\cdot \mathbf {f}({\bf{h}}) &=& E_{i\bar 1}({\bf h})\,\sigma_i\Delta^{-1}\left(\mathbf {f}({\bf{h}})\right)  ,\\
e_{\bar 1 i}\cdot \mathbf {f}({\bf{h}}) & = & E_{\bar 1 i}({\bf h})\,\sigma_i^{-1}\Delta\left(\mathbf {f}({\bf{h}})\right)  .
\end{array}  $$
\end{proposition}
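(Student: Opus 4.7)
The plan is to derive each formula from the commutation relations $[h_k, e_{IJ}]$ inside $\mathcal{U}(\mathfrak{sl}(m|1))$, translate these into statements about pushing $e_{IJ}$ past a polynomial $f(\mathbf{h})$, and finally recognize the resulting variable shifts as the prescribed automorphisms.

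First I would compute, for $k \in \mathbf{m}$ and $I \neq J \in \mathbf{m} \cup \{\bar{1}\}$, the bracket $[h_k, e_{IJ}] = [e_{kk} + e_{\bar{1}\bar{1}}, e_{IJ}]$. Since $h_k$ is even, this is an ordinary commutator, and a direct computation gives
$$[h_k, e_{IJ}] = (\delta_{kI} - \delta_{kJ} + \delta_{\bar{1}I} - \delta_{\bar{1}J})\, e_{IJ}.$$
Substituting the three cases $(I,J) = (i,j)$, $(i,\bar{1})$, $(\bar{1},i)$ yields weights $\delta_{ki} - \delta_{kj}$, $\delta_{ki} - 1$, and $1 - \delta_{ki}$ respectively.

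Next, I would convert these weight relations into left-multiplication rules. Since $[h_k,e_{IJ}] = \lambda_k\, e_{IJ}$ implies $e_{IJ}\, h_k = (h_k - \lambda_k)\, e_{IJ}$, an induction on monomials gives, for any $f(\mathbf{h}) \in \mathbb{C}[\mathbf{h}]$, the identity $e_{IJ}\, f(\mathbf{h}) = \tau_{IJ}(f)(\mathbf{h})\, e_{IJ}$ in $\mathcal{U}(\mathfrak{sl}(m|1))$, where $\tau_{IJ}$ is the algebra automorphism of $\mathbb{C}[\mathbf{h}]$ sending each $h_k$ to $h_k - \lambda_k$. Reading off the shifts from the three cases and comparing with the definitions of $\sigma_\iota$ and $\Delta = \sigma_1\cdots\sigma_m$, I would identify
$$\tau_{ij} = \sigma_i\sigma_j^{-1}, \qquad \tau_{i\bar{1}} = \sigma_i\Delta^{-1}, \qquad \tau_{\bar{1}i} = \sigma_i^{-1}\Delta,$$
noting for the mixed cases that the $\sigma_i$ factors cancel out the shift in the $h_i$ variable, matching the fact that $\lambda_i = 0$ there.

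Finally, I would apply this to the action on $M$. Writing $\mathbf{f}(\mathbf{h}) = f_1(\mathbf{h})\, e_1 + f_2(\mathbf{h})\, e_2$ and using Remark \ref{identificationofM} (so that $f(\mathbf{h})$ acts as polynomial multiplication), I compute
$$e_{IJ} \cdot (f_\ell(\mathbf{h})\, e_\ell) = (\tau_{IJ}\, f_\ell)(\mathbf{h})\,(e_{IJ}\cdot e_\ell) \quad (\ell = 1,2),$$
so that
$$e_{IJ} \cdot \mathbf{f}(\mathbf{h}) = (\tau_{IJ} f_1)(e_{IJ}\cdot e_1) + (\tau_{IJ} f_2)(e_{IJ}\cdot e_2) = E_{IJ}(\mathbf{h})\, \tau_{IJ}(\mathbf{f}(\mathbf{h})),$$
which is exactly the claimed formula in each of the three cases. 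The only substantive step is the bookkeeping of the $\delta$'s in the Cartan brackets; once the weights are correct, matching them against $\sigma_i\sigma_j^{-1}$, $\sigma_i\Delta^{-1}$, and $\sigma_i^{-1}\Delta$ is immediate, so I do not anticipate a genuine obstacle.
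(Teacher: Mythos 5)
Your proposal is correct and follows essentially the same route as the paper: compute the brackets $[h_k,e_{IJ}]$, use them (by induction on degree) to push $e_{IJ}$ past polynomials in $\mathbf h$, and identify the resulting shift automorphism with $\sigma_i\sigma_j^{-1}$, $\sigma_i\Delta^{-1}$, or $\sigma_i^{-1}\Delta$; the paper writes this out only for $e_{i\bar 1}$ and declares the other cases analogous, whereas you treat all three uniformly via the general $\delta$-formula, which is a harmless presentational difference.
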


\begin{proof}
Since $h_ie_{i\bar{1}} = e_{i\bar{1}}h_i $, and $h_je_{i\bar{1}} - e_{i\bar{1}}h_j = -e_{i\bar{1}}$ for all $j\neq i$, it follows (by induction on the degree) that, for every $g({\bf h})\in\C[{\bf h}]$,
$$e_{i\bar{1}}\; g({\bf{h}}) = \left(\prod_{j\in{\bf{m}}\setminus \{i\}} \sigma^{-1}_j \right)\!\!\Big(g({\bf{h}})\Big)\;e_{i\bar{1}} = \Delta^{-1}_i\big(g({\bf{h}})\big)\;e_{i\bar{1}},$$
where $\Delta_i := \sigma_i^{-1} \Delta$.
Therefore,
$$e_{i\bar{1}}\cdot \begin{bmatrix} f_1({\bf{h}})  \\ f_{2}({\bf{h}})\end{bmatrix}
=\sum_{\ell=1}^{2} \Delta_i^{-1}\!\bigl(f_\ell({\bf h})\bigr)\,\bigl(e_{i\bar{1}}\cdot e_\ell\bigr)
=\Bigl[\,e_{i\bar{1}}\cdot e_1 \;\;\; \; e_{i\bar{1}}\cdot e_{2}\,\Bigr]\,
\Delta_i^{-1}\!\left( \begin{bmatrix} f_1({\bf{h}})  \\ f_{2}({\bf{h}})\end{bmatrix} \right).$$
The $\mathcal U(\mathfrak{sl}(m|1))$-actions of the remaining basis elements are obtained analogously, we omit the details.
\end{proof}

\np
Since $\left\{e_{i\bar{1}}, e_{\bar{1}i}: i\in\mathbf m \right\}$ generates $\mathfrak{sl}(m|1)$, the matrices $E_{i\bar{1}}$ and $E_{\bar{1}i}$ uniquely determines all $E_{IJ}$ and hence the module
$M\in\mathcal{M}_{\mathfrak{sl}(m|1)}(2)$. Next we study which collections of matrices $E_{i\bar{1}}, E_{\bar{1}i}$ give rise to modules
in $\mathcal{M}_{\mathfrak{sl}(m|1)}(2)$ and when the modules corresponding to two such collections of matrices are isomorphic.

\begin{remark} \label{structure(1|1)}
   If $M\in\mathcal{M}_{\mathfrak{sl}(m|1)}\bigl(1|1\bigr)$ (resp. $\mathcal{M}^0_{\mathfrak{sl}(m|1)}(1|1)$), then $M=M_{\bar0}\oplus M_{\bar1}$ with $M_{\bar\epsilon}=\C[\mathbf h]$ ($\epsilon=0,1$), endowed with the action described in Lemma~\ref{structuresl(m|1)}. Since 
    $$
x_i\cdot M_{\bar\epsilon}\subset M_{\overline{\epsilon+1}}
,\quad x_i\in \left\{e_{i\bar{1}},e_{\bar{1}i}\right\}\quad\text{while,}\quad e_{ij}\cdot M_{\bar\epsilon}\subset M_{\bar\epsilon},
$$
then
$$
E_{i\bar 1}({\bf h})=\begin{bmatrix}0 & a_{i\bar 1}({\bf h})\\b_{i\bar 1}({\bf h}) & 0\end{bmatrix},\quad
E_{\bar 1 i}({\bf h})=\begin{bmatrix}0 & a_{\bar 1 i}({\bf h})\\b_{\bar 1 i}({\bf h}) & 0 \end{bmatrix},\quad
E_{ij}({\bf h})=\begin{bmatrix} a_{ij}({\bf h}) & 0\\ 0 & b_{ij}({\bf h})\end{bmatrix}, $$
with $a_{IJ}({\bf h}),b_{IJ}({\bf h})\in \C[{\bf h}]$ for all $I,J\in {\bf m}\cup\{\bar 1\}$.

\end{remark}
\begin{definition} \label{sl(m|1)_conjugation}
Let $E_{i\bar{1}}, E_{\bar{1}i} \in \Mat_2(\mathbb{C}[{\bf{h}}])$ for each $i \in {\bf{m}}$, and let $W({\bf{h}}) \in \GL_2(\mathbb{C}[{\bf{h}}])$. For every $i\in\mathbf m$, set
$$
E'_{i\bar{1}} := W^{-1}({\bf{h}}) \, E_{i\bar{1}} \, \Delta_i^{-1}\big(W({\bf{h}})\big), \quad
E'_{\bar{1}i} := W^{-1}({\bf{h}}) \, E_{\bar{1}i} \, \Delta_i\big(W({\bf{h}})\big).
$$
 We say that the tuples $(E_{i\bar{1}},E_{\bar{1}i})_{i\in \mathbf m}$ and
$(E'_{i\bar{1}},E'_{\bar{1}i})_{i\in \mathbf m}$ are
\emph{$\mathcal{M}_{\mathfrak{sl}(m|1)}(2)$-conjugate} if there exists
$W(\mathbf h)\in \GL_2(\mathbb{C}[\mathbf h])$ for which the above identities hold.
\end{definition}

\np
\begin{remark} \label{sl(m|1)-conjugacy}
The $\mathcal{M}_{\mathfrak{sl}(m|1)}(2)$-conjugation defined above is an equivalence relation. We write
$$
\bigl(E_{i\bar{1}},E_{\bar{1}i}\bigr)_{i\in{\bf m}}
\sim_{\mathcal{M}_{\mathfrak{sl}(m|1)}(2)}
\bigl(E'_{i\bar{1}},E'_{\bar{1}i}\bigr)_{i\in{\bf m}}
$$
to indicate that the two $2m$-tuples are $\mathcal{M}_{\mathfrak{sl}(m|1)}(2)$-conjugate.
To classify all objects of $\mathcal{M}_{\mathfrak{sl}(m|1)}(2)$ it suffices to determine,
up to $\mathcal{M}_{\mathfrak{sl}(m|1)}(2)$-conjugation, all compatible tuples
$$
\bigl(E_{i\bar{1}},\,E_{\bar{1}i}\bigr)_{i\in{\bf m}}
\;\in\;
\Mat_2\bigl(\C[{\bf h}]\bigr)^{\times 2m}.
$$
\end{remark}

\subsection{Preparatory results}In this subsection, we gather the necessary results to classify objects in $\mathcal{M}_{\mathfrak{sl} (m|1)}(2)$.

\np 
\begin{lemma}\label{sl(m|1)reln}
Let $\mathcal{R}$ be a unique factorization domain, and let
$P(\mathbf h)\in \Mat_2\bigl(\mathcal{R}[\mathbf h]\bigr)$. Then the solutions $P(\mathbf h)\ \Delta^{-1}(P(\mathbf h))=0$ are the matrices of the form
$$
P(\mathbf h)=\theta(\mathbf h)\!
\begin{bmatrix}
\beta(\mathbf h)\ \Delta(\alpha(\mathbf h)) &
-\alpha(\mathbf h)\ \Delta(\alpha(\mathbf h))\\
\beta(\mathbf h)\ \Delta(\beta(\mathbf h)) &
-\alpha(\mathbf h)\ \Delta(\beta(\mathbf h))
\end{bmatrix},
$$
where $\alpha(\mathbf h),\beta(\mathbf h),\theta(\mathbf h)\in \mathcal{R}[\mathbf h]$ such that $\gcd\bigl(\alpha(\mathbf h),\beta(\mathbf h)\bigr)=1$. Similarly, the solutions of $Q({\bf{h}}) \ \Delta(Q({\bf{h}})) = 0$ are the matrices of the form
$$
Q({\bf{h}}) = \omega({\bf{h}})
\begin{bmatrix}
\gamma({\bf{h}}) \ \Delta^{-1}(\delta({\bf{h}})) & -\delta({\bf{h}})  \ \Delta^{-1}(\delta({\bf{h}})) \\
\gamma({\bf{h}})  \ \Delta^{-1}(\gamma({\bf{h}})) & -\delta({\bf{h}})  \ \Delta^{-1}(\gamma({\bf{h}}))
\end{bmatrix},
$$
where $\gamma({\bf{h}}), \delta({\bf{h}}),\omega({\bf{h}}) \in \mathcal{R}[{\bf{h}}]$ such that $\gcd(\gamma({\bf{h}}), \delta({\bf{h}})) = 1$.
\end{lemma}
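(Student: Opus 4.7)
The plan is a rank-one factorization argument over the UFD $\mathcal{R}[\mathbf h]$; I will detail the case of $P$, the case of $Q$ following by replacing $\Delta$ throughout with $\Delta^{-1}$.

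First, taking determinants of $P\,\Delta^{-1}(P)=0$ gives $\det(P)\cdot\Delta^{-1}(\det P)=0$, and since $\mathcal{R}[\mathbf h]$ is a domain and $\Delta$ is an automorphism, this forces $\det P=0$. The case $P=0$ reduces to $\theta=0$ with any coprime pair, so assume $P\neq 0$. Using the UFD property of $\mathcal{R}[\mathbf h]$, I would choose a primitive column $v=\begin{bmatrix}v_1\\v_2\end{bmatrix}\in\mathcal{R}[\mathbf h]^{2}$ (with $\gcd(v_1,v_2)=1$) such that both columns of $P$ are $\mathcal{R}[\mathbf h]$-multiples of $v$; this yields a factorization $P=v\cdot[r_1,r_2]$ with $r_1,r_2\in\mathcal{R}[\mathbf h]$.

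Next, set $\alpha:=\Delta^{-1}(v_1)$ and $\beta:=\Delta^{-1}(v_2)$; since $\Delta$ is an automorphism, $\gcd(\alpha,\beta)=1$. A direct expansion yields
$$
P\,\Delta^{-1}(P)\;=\;(r_1\alpha+r_2\beta)\;v\cdot\Delta^{-1}\!\bigl([r_1,r_2]\bigr).
$$
Since $v\neq 0$ and $[r_1,r_2]\neq 0$ in a domain, the scalar $r_1\alpha+r_2\beta$ must vanish. From $r_1\alpha=-r_2\beta$ together with $\gcd(\alpha,\beta)=1$ I conclude $\beta\mid r_1$, hence $r_1=\theta\beta$ and $r_2=-\theta\alpha$ for some $\theta\in\mathcal{R}[\mathbf h]$. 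Substituting $v_1=\Delta(\alpha)$ and $v_2=\Delta(\beta)$ into $P=v\cdot[r_1,r_2]$ recovers the stated form.

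For the equation $Q\,\Delta(Q)=0$, the identical script applies: the primitive factorization $Q=v\cdot[r_1,r_2]$ produces the relation $r_1\Delta(v_1)+r_2\Delta(v_2)=0$. Setting $\delta:=\Delta(v_1)$ and $\gamma:=\Delta(v_2)$ (with $\gcd(\gamma,\delta)=1$) and solving $r_1\delta=-r_2\gamma$ via coprimality gives $r_1=\omega\gamma$, $r_2=-\omega\delta$, whence the stated form of $Q$ after substitution.

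The main obstacle I anticipate is the $\mathcal{R}[\mathbf h]$-integral factorization $P=v\cdot r$ in the first step. Over a general integral domain this can fail, but the UFD property of $\mathcal{R}[\mathbf h]$ (via iterated Gauss's lemma in $h_1,\dots,h_m$) ensures that any primitive vector spans a rank-one direct summand of $\mathcal{R}[\mathbf h]^{2}$, so that both columns of the rank-one matrix $P$ are $\mathcal{R}[\mathbf h]$-multiples of a single primitive $v$ (not merely multiples over the fraction field). Once this is established, the remaining steps reduce to elementary gcd manipulations.
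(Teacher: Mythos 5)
Your proof is correct and follows essentially the same route as the paper's: both reduce the problem to a rank-one outer-product factorization of $P$ with one primitive factor, deduce that the middle scalar ($r_1\alpha+r_2\beta$ in your notation) must vanish, and finish with the coprimality argument in the UFD $\mathcal{R}[\mathbf h]$. The only difference is cosmetic: you place the primitive vector on the column (image) side rather than taking a primitive kernel vector as the paper does, which absorbs the paper's separate treatment of the subcases $\alpha=0$ or $\beta=0$ and its four-entry case analysis into the single observation that an outer product of nonzero vectors over a domain is nonzero.
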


\begin{proof}
  We solve $P(\mathbf h)\ \Delta^{-1}(P(\mathbf h))=0$, the other equation is analogous. If $P(\mathbf h)=0$, the claim is immediate by taking $\theta(\mathbf h)=0$. If $P({\bf{h}}) \neq 0$, there exist $\alpha(\mathbf h),\beta(\mathbf h)\in\mathcal R[\mathbf h]$, not both zero, with 
$$
\begin{bmatrix} \alpha({\bf{h}}) \\ \beta({\bf{h}}) \end{bmatrix} \in \ker\left(P({\bf{h}})\right).
$$
\np
If either $\alpha({\bf{h}}) = 0$ or $\beta({\bf{h}}) = 0$, then we deduce that
$$
P({\bf{h}}) =
\begin{bmatrix} 0 & \theta({\bf{h}}) \\ 0 & 0 \end{bmatrix}
\quad \text{or} \quad
P({\bf{h}}_m) =
\begin{bmatrix} 0 & 0 \\ \theta({\bf{h}}) & 0 \end{bmatrix},
$$
for some $\theta({\bf{h}}) \in \mathcal{R}[{\bf{h}}]$. If $\alpha({\bf{h}})\beta({\bf{h}}) \neq 0$, we may assume $\gcd(\alpha({\bf{h}}), \beta({\bf{h}})) = 1$. Since
$$
P({\bf{h}}) \begin{bmatrix} \alpha({\bf{h}}) \\ \beta({\bf{h}}) \end{bmatrix} = \begin{bmatrix} 0 \\ 0 \end{bmatrix},
$$
it follows that
$$
P({\bf{h}}) =
\begin{bmatrix}
\beta({\bf{h}}) x({\bf{h}}) & -\alpha({\bf{h}}) x({\bf{h}}) \\
\beta({\bf{h}}) y({\bf{h}}) & -\alpha({\bf{h}}) y({\bf{h}})
\end{bmatrix},
$$
for some $x({\bf{h}}), y({\bf{h}}) \in \mathcal{R}[{\bf{h}}]$.  
Moreover, the condition $P(\mathbf h)\ \Delta^{-1}(P(\mathbf h))=0$ yields
$$x({\bf{h}})\ \Delta^{-1}(\beta({\bf{h}}))\ \left(\beta({\bf{h}})\ \Delta^{-1}(x({\bf{h}})) - \alpha({\bf{h}}) \ \Delta^{-1}(y({\bf{h}}))\right) =0, $$
$$x({\bf{h}})\ \Delta^{-1}(\alpha({\bf{h}}))\ \left(\beta({\bf{h}}) \ \Delta^{-1}(x({\bf{h}})) - \alpha({\bf{h}}) \ \Delta^{-1}(y({\bf{h}}))\right) =0, $$
$$y({\bf{h}}) \ \Delta^{-1}(\beta({\bf{h}}))\ \left(\beta({\bf{h}})\ \Delta^{-1}(x({\bf{h}})) - \alpha({\bf{h}}) \ \Delta^{-1}(y({\bf{h}}))\right) =0, $$
$$y({\bf{h}})\ \Delta^{-1}(\alpha({\bf{h}}))\left(\beta({\bf{h}}) \ \Delta^{-1}(x({\bf{h}})) - \alpha({\bf{h}})\ \Delta^{-1}(y({\bf{h}}))\right) =0. $$
\np
\textbf{Case 1:} Suppose $\beta({\bf{h}}) \ \Delta^{-1}(x({\bf{h}})) - \alpha({\bf{h}})\ \Delta^{-1}(y({\bf{h}})) \neq 0$. Then the four relations above imply
$$
x({\bf{h}})\ \Delta^{-1}(\beta({\bf{h}})) =
x({\bf{h}})\ \Delta^{-1}(\alpha({\bf{h}})) =
y({\bf{h}})\ \Delta^{-1}(\beta({\bf{h}})) =
y({\bf{h}})\ \Delta^{-1}(\alpha({\bf{h}})) = 0.
$$
Hence $P(\mathbf h)=0$, which is covered with $\theta(\mathbf h)=0$.

\np
\textbf{Case 2:} Suppose $\beta({\bf{h}}) \ \Delta^{-1}(x({\bf{h}})) - \alpha({\bf{h}})\ \Delta^{-1}(y({\bf{h}})) = 0$.
Since $\gcd(\alpha({\bf{h}}), \beta({\bf{h}})) = 1$, it follows that
$$
x({\bf{h}}) = x'({\bf{h}}) \Delta(\alpha({\bf{h}}))
\quad \text{and} \quad
y({\bf{h}}) = y'({\bf{h}}) \Delta(\beta({\bf{h}})),
$$
for some $x'({\bf{h}}), y'({\bf{h}}) \in \mathcal{R}[{\bf{h}}]$.
Substituting back yields $x'({\bf{h}}) = y'({\bf{h}}) = \theta({\bf{h}})$, for some $\theta({\bf{h}}) \in \mathcal{R}[{\bf{h}}]$. Therefore,
$$
P({\bf{h}}) = \theta({\bf{h}})
\begin{bmatrix}
\beta({\bf{h}})\ \Delta(\alpha({\bf{h}})) & -\alpha({\bf{h}})\ \Delta(\alpha({\bf{h}})) \\
\beta({\bf{h}}) \ \Delta(\beta({\bf{h}})) & -\alpha({\bf{h}})\ \Delta(\beta({\bf{h}}))
\end{bmatrix},
$$
which proves the lemma.
\end{proof}

\np
\begin{proposition}\label{solutionsl(m|1)}
Let $\mathcal{R}$ be a unique factorization domain. Suppose $P({\bf h}),Q({\bf h})\in \Mat_2\big(\mathcal R[{\bf h}]\big)$ satisfy
$$
P(\mathbf h)\ \Delta^{-1}(P(\mathbf h)) = Q({\bf{h}}) \ \Delta(Q({\bf{h}})) = 0,
$$
$$
P({\bf{h}})\ \Delta^{-1}(Q({\bf{h}})) + Q({\bf{h}})\ \Delta(P({\bf{h}})) = a \I_2,
$$
where $a$ is an irreducible element in $\mathcal{R}$. Let $\GL_2(\mathcal{R}[{\bf{h}}])$ act on pairs $(P({\bf{h}}), Q({\bf{h}}))$ by
$$
G({\bf{h}}) \cdot \left(P({\bf{h}}), Q({\bf{h}})\right) := \left( G({\bf{h}})P({\bf{h}})\ \Delta^{-1}\left(G^{-1}({\bf{h}} )\right), \,\, G({\bf{h}})Q({\bf{h}})\ \Delta \left(G^{-1}({\bf{h}})\right) \right),
$$
where $G({\bf{h}}) \in \GL_2(\mathcal{R}[{\bf{h}}])$.
Then $(P({\bf{h}}), Q({\bf{h}}))$ lies in the same $\GL_2(\mathcal{R}[{\bf{h}}])$-orbit with 
$$
\left(
\begin{bmatrix} 0 & u({\bf{h}}) \\ 0 & 0 \end{bmatrix},
\begin{bmatrix} 0 & 0 \\ v({\bf{h}}) & 0 \end{bmatrix}
\right),
$$
where $u({\bf{h}}), v({\bf{h}}) \in \mathcal{R}[{\bf{h}}]$ satisfy $\ \Delta \left(u({\bf{h}})\right)v({\bf{h}}) = a$.
\end{proposition}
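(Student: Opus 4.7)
The plan is a two-stage normalization within the given $\GL_2(\mathcal R[\mathbf h])$-orbit: first arrange $P$ into upper-triangular Jordan form, and then use the residual stabilizer to clear the nonzero entries of $Q$ other than the $(2,1)$-slot. Note that $P\neq 0$ (else the trace identity would force $a\I_2=0$), so Lemma~\ref{sl(m|1)reln} applies nontrivially.

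For the first stage, I will invoke Lemma~\ref{sl(m|1)reln} to factor
$$P(\mathbf h)=\theta(\mathbf h)\begin{bmatrix}\Delta(\alpha)\\ \Delta(\beta)\end{bmatrix}\begin{bmatrix}\beta & -\alpha\end{bmatrix},$$
with $\gcd(\alpha,\beta)=1$. Because $\Delta$ is an automorphism, also $\gcd(\Delta(\alpha),\Delta(\beta))=1$, so I can pick $r,s\in\mathcal R[\mathbf h]$ with $r\Delta(\alpha)+s\Delta(\beta)=1$ and set
$$G=\left[\begin{smallmatrix}r & s\\ \Delta(\beta) & -\Delta(\alpha)\end{smallmatrix}\right]\in\GL_2(\mathcal R[\mathbf h]).$$
This choice is engineered so that $G[\Delta(\alpha),\Delta(\beta)]^{\mathsf T}=e_1$ and $[\beta,-\alpha]\,\Delta^{-1}(G^{-1})=e_2^{\mathsf T}$, giving $GP\Delta^{-1}(G^{-1})=\left[\begin{smallmatrix}0 & u\\0 & 0\end{smallmatrix}\right]$ for some $u\in\mathcal R[\mathbf h]$.

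For the second stage, after replacing $(P,Q)$ by their images, I will plug the normalized $P$ into $P\Delta^{-1}(Q)+Q\Delta(P)=a\I_2$ and match entries; this yields $u\Delta^{-1}(q_3)=a$, $q_3\Delta(u)=a$, and $u\Delta^{-1}(q_4)+q_1\Delta(u)=0$. Since $\Delta$ preserves total degree in $\mathbf h$, the identity $\Delta(u)\,q_3=a\in\mathcal R$ forces $u,q_3\in\mathcal R$; setting $v:=q_3$, we get $\Delta(u)v=uv=a$ and, cancelling $u$, also $q_4=-\Delta(q_1)$. A short calculation identifies the stabilizer of the normalized $P$ as
$$\left\{\left[\begin{smallmatrix}c & g\\0 & c\end{smallmatrix}\right]\ :\ c\in\mathcal R^\times,\ g\in\mathcal R[\mathbf h]\right\},$$
and shows that such a matrix sends $q_1$ to $q_1+(g/c)v$ while preserving $q_3=v$. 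Choosing $g/c=-q_1/v$ will kill $q_1$, and the relations $q_4=-\Delta(q_1)$ and $q_2v=-q_1\Delta(q_1)$ (which is the $(1,1)$-entry of $Q\Delta(Q)=0$) then kill $q_4$ and $q_2$ as well.

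The one nontrivial point is that the choice $g=-cq_1/v$ must lie in $\mathcal R[\mathbf h]$, i.e.\ $v\mid q_1$. To establish this, I will use that $\mathcal R[\mathbf h]$ is a UFD by iterated Gauss: the identity $q_2v=-q_1\Delta(q_1)$ gives $v\mid q_1\Delta(q_1)$, and since $a=uv$ is irreducible in $\mathcal R$, either $v\in\mathcal R^\times$ (nothing to prove) or $v$ is an associate of $a$, hence prime in $\mathcal R[\mathbf h]$. In the prime case $v\mid q_1$ or $v\mid\Delta(q_1)$; because $v\in\mathcal R$ is $\Delta$-invariant, $v\mid\Delta(q_1)$ is equivalent to $v\mid q_1$, so $v\mid q_1$ in every case. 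This is the main obstacle; once it is cleared the normalization of $Q$ is immediate and the proposition follows.
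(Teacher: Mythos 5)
There is a genuine gap in your first normalization stage. From $\gcd\bigl(\Delta(\alpha),\Delta(\beta)\bigr)=1$ you conclude that there exist $r,s\in\mathcal R[\mathbf h]$ with $r\,\Delta(\alpha)+s\,\Delta(\beta)=1$. In a UFD coprimality does not imply a B\'ezout identity: already in $\C[h_1,h_2]$ the elements $h_1$ and $h_2$ are coprime but do not generate the unit ideal. Since the proposition is applied with $\mathcal R[\mathbf h]=\C[h_1,\dots,h_m]$ for $m\geq 2$, this is exactly the situation at hand, and without the B\'ezout relation your matrix $G=\left[\begin{smallmatrix} r & s\\ \Delta(\beta) & -\Delta(\alpha)\end{smallmatrix}\right]$ (whose determinant is $-\bigl(r\Delta(\alpha)+s\Delta(\beta)\bigr)$) need not lie in $\GL_2(\mathcal R[\mathbf h])$. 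So the claim that $P$ alone can be brought to the form $\left[\begin{smallmatrix}0&u\\0&0\end{smallmatrix}\right]$ is unjustified as stated.

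It is true that the hypotheses ultimately force comaximality of $\Delta(\alpha)$ and $\Delta(\beta)$, but proving this requires bringing $Q$ into the picture from the start, which is what the paper does: writing $Q=\omega\left[\begin{smallmatrix}\gamma\,\Delta^{-1}(\delta) & -\delta\,\Delta^{-1}(\delta)\\ \gamma\,\Delta^{-1}(\gamma) & -\delta\,\Delta^{-1}(\gamma)\end{smallmatrix}\right]$ via the same lemma, the relation $P\,\Delta^{-1}(Q)+Q\,\Delta(P)=a\I_2$ together with the irreducibility of $a$ in $\mathcal R$ yields $\Delta(\alpha)\,\Delta^{-1}(\gamma)-\Delta^{-1}(\delta)\,\Delta(\beta)=c\in\mathcal R^\times$, which is precisely the needed B\'ezout identity with coefficients $\Delta^{-1}(\gamma)$ and $-\Delta^{-1}(\delta)$ supplied by $Q$; the resulting $G=\left[\begin{smallmatrix}\Delta^{-1}(\gamma) & -\Delta^{-1}(\delta)\\ \Delta(\beta) & -\Delta(\alpha)\end{smallmatrix}\right]$ then normalizes $P$ and $Q$ simultaneously, making your second stage unnecessary. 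Your second stage is essentially sound on its own terms (the degree argument giving $u,v\in\mathcal R$, the stabilizer computation, and the divisibility $v\mid q_1$ via primality of $a$ in $\mathcal R[\mathbf h]$ all check out), but it cannot rescue the argument because it presupposes the normal form of $P$ that stage one fails to deliver.
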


\begin{proof}
By Lemma~\ref{sl(m|1)reln}, there exist $\alpha(\mathbf h),\beta(\mathbf h),\theta(\mathbf h),
 \gamma(\mathbf h),\delta(\mathbf h),\omega(\mathbf h)\in \mathcal R[\mathbf h], $ with
$$
\gcd\!\big(\alpha(\mathbf h),\beta(\mathbf h)\big)
\;=\;\gcd\!\big(\gamma(\mathbf h),\delta(\mathbf h)\big)\;=\;1,
$$
such that
$$
P({\bf{h}}) =\theta(\mathbf h)\!
\begin{bmatrix}
\beta(\mathbf h)\ \Delta(\alpha(\mathbf h)) &
-\alpha(\mathbf h)\ \Delta(\alpha(\mathbf h))\\
\beta(\mathbf h)\ \Delta(\beta(\mathbf h)) &
-\alpha(\mathbf h)\ \Delta(\beta(\mathbf h))
\end{bmatrix},
$$
$$
Q({\bf{h}}) = \omega({\bf{h}})
\begin{bmatrix}
\gamma({\bf{h}}) \ \Delta^{-1}(\delta({\bf{h}})) & -\delta({\bf{h}})  \ \Delta^{-1}(\delta({\bf{h}})) \\
\gamma({\bf{h}})  \ \Delta^{-1}(\gamma({\bf{h}})) & -\delta({\bf{h}})  \ \Delta^{-1}(\gamma({\bf{h}}))
\end{bmatrix}.
$$
\np
Since $P({\bf{h}})\ \Delta^{-1}(Q({\bf{h}})) + Q({\bf{h}})\ \Delta(P({\bf{h}})) = a \I_2$, we deduce the following relations:
\begin{multline*}
    \Delta(\alpha({\bf{h}}))\ \Delta^{-1}(\delta({\bf{h}}))\
   \Bigl(-\beta({\bf{h}})\ \Delta^{-2}(\delta({\bf{h}}))\ \theta({\bf{h}}) \ \Delta^{-1}(\omega({\bf{h}})) +\\ +\alpha({\bf{h}}) \ \Delta^{-2}(\gamma({\bf{h}}))\ \theta({\bf{h}}) \ \Delta^{-1}(\omega({\bf{h}})) - \gamma({\bf{h}})\ \Delta^{2}(\alpha({\bf{h}}))\ \omega({\bf{h}})\ \Delta( \theta({\bf{h}}))  +\\
   +\delta({\bf{h}}) \ \Delta^{2}(\beta({\bf{h}}))\ \omega({\bf{h}}) \ \Delta( \theta({\bf{h}}))\Bigl)  = 0, 
\end{multline*}
\begin{multline*}
    \Delta(\beta({\bf{h}}))\ \Delta^{-1}(\gamma({\bf{h}}))
   \Bigl(-\beta({\bf{h}}_m)\ \Delta^{-2}(\delta({\bf{h}}))\ \theta({\bf{h}})\ \Delta^{-1}(\omega({\bf{h}})) + \\ +\alpha({\bf{h}})\ \Delta^{-2}(\gamma({\bf{h}}))\ \theta({\bf{h}}) \ \Delta^{-1}(\omega({\bf{h}}))- \gamma({\bf{h}})\ \Delta^{2}(\alpha({\bf{h}}))\ \omega({\bf{h}})\  \Delta(\theta({\bf{h}}))+\\ +\delta({\bf{h}})\ \Delta^{2}( \beta({\bf{h}}))\ \omega({\bf{h}}) \ \Delta(\theta({\bf{h}}))\Bigl)  = 0, 
\end{multline*}

\begin{multline}   \label{eq-sl(m|1)}
    \Bigl(\Delta(\alpha({\bf{h}}))\ \Delta^{-1}(\gamma({\bf{h}})) - \Delta^{-1}(\delta({\bf{h}}))\ \Delta(\beta({\bf{h}})) \Bigl)\Bigl(\beta({\bf{h}})\ \Delta^{-2}(\delta({\bf{h}}))\\
    -\alpha({\bf{h}}) \ \Delta^{-2}(\gamma({\bf{h}})) \Bigl)\theta({\bf{h}})\ \Delta^{-1}(\omega({\bf{h}})) = a  ,
\end{multline}     
For brevity, we denote
\begin{equation*} 
\begin{aligned}
t({\bf{h}}) := \; & 
- \beta({\bf{h}})\ \Delta^{-2}(\delta({\bf{h}}))\ \theta({\bf{h}}) \ \Delta^{-1}(\omega({\bf{h}}))
+ \alpha({\bf{h}})\ \Delta^{-2}(\gamma({\bf{h}}))\ \theta({\bf{h}})\ \Delta^{-1}(\omega({\bf{h}})) \\
& - \gamma({\bf{h}}) \ \Delta^2(\alpha({\bf{h}}))\ \omega({\bf{h}})\  \Delta(\theta({\bf{h}}))
+ \delta({\bf{h}})\ \Delta^{2}(\beta({\bf{h}}))\ \omega({\bf{h}})\ \Delta(\theta({\bf{h}})).
\end{aligned}
\end{equation*} 
\np
\textbf{Case 1:} Suppose $t({\bf{h}}) \neq 0$. Then $ \Delta(\alpha({\bf{h}}))\ \Delta^{-1}(\delta({\bf{h}})) =  \Delta(\beta({\bf{h}}))\ \Delta^{-1}(\gamma({\bf{h}})) =0$. 
It follows that $\alpha({\bf{h}}) = \gamma({\bf{h}}) = 0$, or $\beta({\bf{h}}) = \delta({\bf{h}}) = 0$. This settles Case~1.

\np
\textbf{Case 2:} Suppose $t({\bf{h}}) = 0$.  
From \eqref{eq-sl(m|1)}, we obtain:
\begin{equation}\label{eq2-sl(m|1)}
\begin{aligned}
&\bigl(\Delta(\alpha({\bf h}))\ \Delta^{-1}(\gamma({\bf h}))
      -\Delta^{-1}(\delta({\bf h}))\ \Delta(\beta({\bf h}))\bigr)\\
&\qquad\qquad \cdot\bigl(\delta({\bf h})\ \Delta^{2}(\beta({\bf h}))
      -\gamma({\bf h})\ \Delta^{2}(\alpha({\bf h}))\bigr)\,
      \omega({\bf h})\,\Delta(\theta({\bf h}))=a.
\end{aligned}
\end{equation}
\np
Since $a$ is an irreducible element of $\mathcal{R}$, \eqref{eq2-sl(m|1)} implies
$$
\Delta(\alpha({\bf h}))\ \Delta^{-1}(\gamma({\bf h}))
      -\Delta^{-1}(\delta({\bf h}))\ \Delta(\beta({\bf h})) = c,
$$
where $c \in \mathcal{R}^{\times}$. Hence
$$
G({\bf{h}}) :=
\begin{bmatrix}
\Delta^{-1}(\gamma({\bf{h}})) & -\Delta^{-1}(\delta({\bf{h}})) \\
\Delta(\beta({\bf{h}})) & -\Delta(\alpha({\bf{h}}))
\end{bmatrix}
\in \GL_2(\mathcal{R}[{\bf{h}}]).
$$

\np
A direct computation shows
$$G({\bf{h}})\ P({\bf{h}})\ \Delta^{-1}\left(G^{-1}({\bf{h}})\right) =\begin{bmatrix} 0 & c\theta({\bf{h}}) \\ 0 & 0 \end{bmatrix}, \quad G({\bf{h}})\ Q({\bf{h}})\ \Delta(G^{-1}({\bf{h}}))=\begin{bmatrix} 0 & 0 \\ -c\omega({\bf{h}}) & 0 \end{bmatrix}. $$

\np
Moreover, \eqref{eq2-sl(m|1)} also implies that $-c^2 \omega({\bf{h}})\ \Delta(\theta({\bf{h}})) = a$. By letting
$$
u({\bf{h}}) := c\theta({\bf{h}}), \qquad
v({\bf{h}}) := -c\omega({\bf{h}}),
$$
the Proposition follows.
\end{proof}

\begin{corollary}\label{genneralizedsl(1|1)}
Let $P({\bf{h}}), Q({\bf{h}}) \in \Mat_2(\mathbb{C}[{\bf{h}}])$ be such that
$$
P({\bf{h}})\ \sigma_m\Delta^{-1} (P({\bf{h}})) =
Q({\bf{h}})\ \sigma_m^{-1}\Delta(Q({\bf{h}})) = 0,
$$
$$
P({\bf{h}})\ \sigma_m\Delta^{-1}(Q({\bf{h}})) +
Q({\bf{h}})\ \sigma_m^{-1}\Delta(P({\bf{h}})) =
h_m\I_2.
$$
Then $(P({\bf{h}}), Q({\bf{h}}))$ is conjugate to
$$
\left( \begin{bmatrix} 0 & c h_{m} \\ 0 & 0 \end{bmatrix},
        \begin{bmatrix} 0 & 0 \\ c^{-1} & 0 \end{bmatrix} \right)
\quad \text{or} \quad
\left( \begin{bmatrix} 0 & c \\ 0 & 0 \end{bmatrix},
        \begin{bmatrix} 0 & 0 \\ c^{-1} h_{m} & 0 \end{bmatrix} \right),
\quad \text{for some } c \in \mathbb{C}^\times,
$$
up to the twisted conjugation: $(P'({\bf{h}}), Q'({\bf{h}})) \sim (P({\bf{h}}), Q({\bf{h}}))$ if and only if
$$
P'({\bf{h}}) = W^{-1}({\bf{h}})\, P({\bf{h}})\  \sigma_m\Delta^{-1}(W({\bf{h}})),\quad Q'({\bf{h}}) = W^{-1}({\bf{h}})\, Q({\bf{h}})\ \sigma_m^{-1}\Delta(W({\bf{h}})),
$$
for some $W({\bf{h}}) \in \GL_2(\mathbb{C}[{\bf{h}}])$.
\end{corollary}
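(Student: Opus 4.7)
The plan is to reduce the corollary to Proposition \ref{solutionsl(m|1)} by repackaging the twisting automorphism and the base ring, and then to finish with a factorization argument in a UFD. The key observation I would start from is that $\sigma_m\Delta^{-1}=\sigma_1^{-1}\cdots\sigma_{m-1}^{-1}$ and $\sigma_m^{-1}\Delta=\sigma_1\cdots\sigma_{m-1}$ both fix $h_m$. Setting $\tilde\Delta:=\sigma_1\cdots\sigma_{m-1}$, the hypotheses of the corollary become
$$
P\,\tilde\Delta^{-1}(P)=0,\quad Q\,\tilde\Delta(Q)=0,\quad P\,\tilde\Delta^{-1}(Q)+Q\,\tilde\Delta(P)=h_m\I_2,
$$
which is exactly the setup of Proposition \ref{solutionsl(m|1)} applied to the UFD $\mathcal R:=\C[h_m]$, the polynomial ring $\C[\mathbf h]=\mathcal R[h_1,\dots,h_{m-1}]$, the automorphism $\tilde\Delta$ (which fixes $\mathcal R$ pointwise), and the irreducible element $a:=h_m\in\mathcal R$.

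Next I would invoke Proposition \ref{solutionsl(m|1)} directly, noting that its proof uses only the UFD property of $\mathcal R$, the fact that $\Delta$ is a $\C$-algebra automorphism of $\mathcal R[\mathbf h]$ fixing $\mathcal R$, and the irreducibility of $a$ in $\mathcal R$; none of these require $\tilde\Delta$ to act nontrivially on $\mathcal R$. After absorbing the change of variable $W:=G^{-1}$, which converts the $\GL_2$-action of the proposition into the twisted conjugation defined in the corollary, this produces $W\in\GL_2(\C[\mathbf h])$ and $u,v\in\C[\mathbf h]$ with $\tilde\Delta(u)\,v=h_m$ such that
$$
W^{-1}P\,\tilde\Delta^{-1}(W)=\begin{bmatrix}0&u\\0&0\end{bmatrix},\qquad W^{-1}Q\,\tilde\Delta(W)=\begin{bmatrix}0&0\\v&0\end{bmatrix}.
$$

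Finally, I would resolve the factorization $\tilde\Delta(u)\,v=h_m$ inside the UFD $\C[\mathbf h]$. Since $h_m$ is irreducible and $\C[\mathbf h]^{\times}=\C^{\times}$, exactly one factor is a nonzero constant and the other an associate of $h_m$; since $\tilde\Delta$ preserves polynomial degree and fixes both constants and $h_m$, this forces either $(u,v)=(c,\,c^{-1}h_m)$ or $(u,v)=(c\,h_m,\,c^{-1})$ for some $c\in\C^{\times}$, matching the two normal forms in the statement. I expect the main obstacle to be conceptual rather than computational, namely justifying that Proposition \ref{solutionsl(m|1)} indeed goes through when the twisting automorphism restricts to the identity on the chosen base ring $\mathcal R$; everything after that is a short factorization argument.
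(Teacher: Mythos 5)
Your proposal is correct and follows essentially the same route as the paper, which proves the corollary by setting $\mathcal R:=\C[h_m]$, viewing $\C[\mathbf h]$ as $\mathcal R[h_1,\dots,h_{m-1}]$, and applying Proposition~\ref{solutionsl(m|1)} with $a=h_m$ and the twisting automorphism $\sigma_1\cdots\sigma_{m-1}$. You merely make explicit two steps the paper leaves implicit, namely that the automorphism fixes $\mathcal R$ pointwise and the final factorization of $\tilde\Delta(u)\,v=h_m$ into the two normal forms, both of which you handle correctly.
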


    \begin{proof}
Set $\mathcal R:=\C[h_m]$. Then $\mathcal R[h_1,\dots,h_{m-1}]=\C[h_1,\dots,h_m]$, so the claim is a direct application of Proposition~\ref{solutionsl(m|1)}.
\end{proof}

\subsection{Classification of objects in the category $\mathcal{M}_{\mathfrak{sl}(m|1)}(2)$}
We now give a complete description of the category $\mathcal{M}_{\mathfrak{sl}(m|1)}(2)$, as stated in the following theorem.
\np
\begin{theorem} \label{classificationsl(m|1)}
Given ${\bf{a}} := (a_1,\dots, a_m) \in (\mathbb{C}^\times)^m$, $\mathcal{S} \subset {\bf{m}}$, define the matrices $E_{i\bar{1}}, E_{\bar{1}i}$ as follows
$$E_{i\bar{1}} = \begin{bmatrix} 0 & a_i h_i \\ 0 & 0  \end{bmatrix},\quad E_{\bar{1}i} = \begin{bmatrix} 0 & 0 \\ a_i^{-1} & 0  \end{bmatrix} \quad\text{if}\quad i \in \mathcal{S};$$
    $$E_{i\bar{1}} = \begin{bmatrix} 0 & a_i \\ 0 & 0  \end{bmatrix},\quad E_{\bar{1}i} = \begin{bmatrix} 0 & 0 \\ a_i^{-1} h_{i}
    & 0  \end{bmatrix} \quad\text{if}\quad i \not\in \mathcal{S} .$$
\np
Then these matrices $E_{i\bar{1}}, E_{\bar{1}i}$ determine a $\mathcal{U}(\mathfrak{sl}(m|1))$-module, denoted $M({\bf a},\mathcal S)$. Moreover, if $M\in\mathcal{M}_{\mathfrak{sl}(m|1)}(2)$, then $M \simeq M({\bf a},\mathcal S) $, for some ${\bf a}\in(\C^\times)^m$ and $\mathcal S\subset{\bf m}$.
\end{theorem}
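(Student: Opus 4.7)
My plan is to prove existence and classification in sequence. For existence, by Proposition~\ref{structuresl(m|1)} it suffices to verify that the prescribed matrices $E_{i\bar 1}, E_{\bar 1 i}$ satisfy the relations among the odd generators of $\mathfrak{sl}(m|1)$: the single-index nilpotencies $E_{i\bar 1}\,\sigma_i\Delta^{-1}(E_{i\bar 1}) = E_{\bar 1 i}\,\sigma_i^{-1}\Delta(E_{\bar 1 i}) = 0$, the anticommutator $E_{i\bar 1}\,\sigma_i\Delta^{-1}(E_{\bar 1 i}) + E_{\bar 1 i}\,\sigma_i^{-1}\Delta(E_{i\bar 1}) = h_i \I_2$, and, for $i\neq j$, the cross-nilpotency $E_{i\bar 1}\,\sigma_i\Delta^{-1}(E_{j\bar 1}) + E_{j\bar 1}\,\sigma_j\Delta^{-1}(E_{i\bar 1}) = 0$ together with its dual. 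The key observation is that $\sigma_i\Delta^{-1} = \prod_{k\neq i}\sigma_k^{-1}$ and $\sigma_i^{-1}\Delta = \prod_{k\neq i}\sigma_k$ both fix $h_i$ and hence leave $E_{i\bar 1}, E_{\bar 1 i}$ invariant, since the latter have entries lying in $\C[h_i]$. The nilpotency and cross-nilpotency identities then collapse to matrix products of the form $\bigl[\begin{smallmatrix}0 & *\\ 0 & 0\end{smallmatrix}\bigr]\bigl[\begin{smallmatrix}0 & *\\ 0 & 0\end{smallmatrix}\bigr] = 0$, while the anticommutator reduces to the scalar identity $a_i h_i \cdot a_i^{-1} = h_i$ or $a_i \cdot a_i^{-1} h_i = h_i$ according to whether $i \in \mathcal S$.

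For the classification, take $M \in \mathcal{M}_{\mathfrak{sl}(m|1)}(2)$ with its matrices $(E_{i\bar 1}, E_{\bar 1 i})_{i \in {\bf m}}$ from Proposition~\ref{structuresl(m|1)}. I first apply Corollary~\ref{genneralizedsl(1|1)} to the pair $(E_{1\bar 1}, E_{\bar 1 1})$; although stated for index $m$, the corollary holds verbatim for any index $i$ by setting $\mathcal{R} := \C[h_i]$ in Proposition~\ref{solutionsl(m|1)}. This yields $W \in \GL_2(\C[{\bf h}])$ that brings $(E_{1\bar 1}, E_{\bar 1 1})$ into one of the two canonical forms, and applying the same $W$ to the remaining pairs is legitimate since conjugation preserves the module structure. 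Writing the new $E_{j\bar 1} = \bigl[\begin{smallmatrix} p & q \\ r & s \end{smallmatrix}\bigr]$ for $j \neq 1$, the cross-nilpotency with $i = 1$ together with the fact that the unique nonzero entry of $E_{1\bar 1}$ is a non-zero-divisor in the domain $\C[{\bf h}]$ forces $r = 0$; the single-index nilpotency $E_{j\bar 1}\,\sigma_j\Delta^{-1}(E_{j\bar 1}) = 0$ then forces $p = s = 0$. The dual argument using $E_{\bar 1 1}$ and $E_{\bar 1 j}$ shows each $E_{\bar 1 j}$ is strictly lower-triangular.

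Finally, for each $j \in {\bf m}$ the single-index anticommutator becomes $u_j({\bf h}) \cdot \sigma_j\Delta^{-1}(v_j({\bf h})) = h_j$, where $u_j, v_j \in \C[{\bf h}]$ are the nonzero entries of $E_{j\bar 1}, E_{\bar 1 j}$. Since $\sigma_j\Delta^{-1}$ shifts $h_k \mapsto h_k + 1$ for $k \neq j$ and fixes $h_j$, comparing leading coefficients in each $h_k$ (which cannot cancel because $\C[{\bf h}]$ is a domain) forces $\deg_{h_k} u_j = \deg_{h_k} v_j = 0$ for every $k \neq j$. Hence $u_j, v_j \in \C[h_j]$ and the equation reduces to $u_j v_j = h_j$ in $\C[h_j]$; irreducibility of $h_j$ then yields $\{u_j, v_j\} = \{a_j h_j,\, a_j^{-1}\}$ or $\{a_j,\, a_j^{-1} h_j\}$ for some $a_j \in \C^\times$, producing the data $({\bf a}, \mathcal S)$ with $\mathcal S := \{j \in {\bf m} : u_j \in \C^\times \cdot h_j\}$. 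The main delicate step will be the simultaneous triangularization of all $E_{j\bar 1}, E_{\bar 1 j}$ via a single $W$; fortunately this is forced by the cross-relations with the fixed index $1$ alone, thereby avoiding any iterative conjugation.
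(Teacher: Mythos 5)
Your proposal is correct and follows essentially the same route as the paper: normalize one distinguished pair $(E_{i\bar 1},E_{\bar 1 i})$ via Corollary~\ref{genneralizedsl(1|1)} (the paper anchors at $i=m$, you at $i=1$, which is an immaterial relabeling), then use the cross-anticommutation with that fixed index to kill the lower-left entries of the remaining $E_{j\bar 1}$, the single-index nilpotency to kill the diagonal entries, and the anticommutator $\{e_{j\bar 1},e_{\bar 1 j}\}=h_j$ to pin down the surviving entries as $\{a_jh_j,a_j^{-1}\}$ or $\{a_j,a_j^{-1}h_j\}$. Your explicit degree argument reducing $u_j,v_j$ to $\C[h_j]$ is a small addition the paper leaves implicit, and your existence check is at the same level of detail as the paper's ``routine, omitted.''
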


\begin{proof}
Verifying that $M({\bf a},\mathcal S)$ is an $\mathcal{U}(\mathfrak{sl}(m|1))$-module is routine; we omit the details. Let $M\in\mathcal{M}_{\mathfrak{sl}(m|1)}(2)$, then $M = \C[\mathbf h]^{\oplus 2}$ as a vector space. With respect to the standard $\C[\mathbf h]$–basis, $e_{i\bar 1}$ and $e_{\bar 1 i}$ act by matrices $E_{i\bar 1},E_{\bar 1 i}\in\Mat_2(\C[\mathbf h])$ for all $i\in\mathbf m$. By Remark \ref{sl(m|1)-conjugacy}, we determine these matrices up to $\mathcal{M}_{\mathfrak{sl}(m|1)}(2)$–conjugation. For all $\mathbf {f}({\bf{h}}) \in \C[\mathbf h]^{\oplus 2}$,
$$
e_{m\bar{1}} \cdot e_{m\bar{1}} \cdot \mathbf {f}({\bf{h}}) = e_{\bar{1}m} \cdot e_{\bar{1}m} \cdot \mathbf {f}({\bf{h}}) = 0,\quad e_{m\bar{1}} \cdot e_{\bar{1}m} \cdot \mathbf {f}({\bf{h}}) + e_{\bar{1}m} \cdot e_{m\bar{1}} \cdot \mathbf {f}({\bf{h}}) = h_{\bar{1}}\cdot \mathbf {f}({\bf{h}}),
$$
it follows that the matrices $E_{m\bar 1},E_{\bar 1 m}$ satisfy
    $$E_{m\bar{1}}\ \Delta^{-1}_m(E_{m\bar{1}}) = E_{\bar{1}m}\ \Delta_m(E_{\bar{1}m}) = 0,$$
    $$E_{m\bar{1}}\ \Delta^{-1}_m(E_{\bar{1}m}) + E_{\bar{1}m}\ \Delta_m(E_{m\bar{1}}) = h_m\I_m.$$
\np
   By Corollary~\ref{genneralizedsl(1|1)}, up to $\mathcal{M}_{\mathfrak{sl}(m|1)}(2)$-conjugation, we have:
\begin{equation} \label{matrix-odd}
    E_{m\bar{1}}=\begin{bmatrix}0 & a_m h_m\\ 0 & 0\end{bmatrix},\quad E_{\bar{1}m}=\begin{bmatrix}0 & 0\\ a_m^{-1} & 0\end{bmatrix}\ \text{or}\ E_{m\bar{1}}=\begin{bmatrix}0 & a_m\\ 0 & 0\end{bmatrix},\quad E_{\bar{1}m}=\begin{bmatrix}0 & 0\\ a_m^{-1}h_m & 0\end{bmatrix},
\end{equation}
for some $a_m \in \mathbb{C}^\times$. For $i \in {\bf{m}} \setminus \{m\}$, let
$$
E_{i\bar{1}} =
\begin{bmatrix}
a_i({\bf{h}}) & b_i({\bf{h}}) \\
c_i({\bf{h}}) & d_i({\bf{h}})
\end{bmatrix},
\quad \text{where } a_i({\bf{h}}), b_i({\bf{h}}), c_i({\bf{h}}), d_i({\bf{h}}) \in \mathbb{C}[{\bf{h}}].
$$
\np
Since
$e_{i\bar{1}} \cdot e_{m\bar{1}} \cdot \mathbf {f}({\bf{h}}) = -e_{m\bar{1}} \cdot e_{i\bar{1}} \cdot \mathbf {f}({\bf{h}})$,  for all $\mathbf {f}({\bf{h}}) \in \C[\mathbf h]^{\oplus 2}$, it follows that
$$
\begin{bmatrix}
a_i({\bf{h}}) & b_i({\bf{h}}) \\
c_i({\bf{h}}) & d_i({\bf{h}})
\end{bmatrix}
\begin{bmatrix}
0 & \Delta_i^{-1} \left(u\left(h_{m}\right)\right) \\
0 & 0
\end{bmatrix}
= 
- \begin{bmatrix}
0 & u\left(h_{m}\right) \\
0 & 0
\end{bmatrix}
\Delta_m^{-1}
\left(
\begin{bmatrix}
a_i({\bf{h}}) & b_i({\bf{h}}) \\
c_i({\bf{h}}) & d_i({\bf{h}})
\end{bmatrix}
\right),
$$
where $u\left(h_m\right) \in \{a_m,\;a_m h_m\}$. This forces $c_i({\bf{h}}) = 0$.

\np
Moreover, since $e_{i\bar{1}} \cdot e_{i\bar{1}} \cdot \mathbf {f}({\bf{h}}) = 0$ for all $i \in {\bf{m}} \setminus \{m\}$ and $\mathbf {f}({\bf{h}}) \in \C[\mathbf h]^{\oplus 2}$, we have
$$
\begin{bmatrix}
a_i({\bf{h}}) & b_i({\bf{h}}) \\
0 & d_i({\bf{h}})
\end{bmatrix}
\Delta_i^{-1}
\left(
\begin{bmatrix}
a_i({\bf{h}}) & b_i({\bf{h}}) \\
0 & d_i({\bf{h}})
\end{bmatrix}
\right)
= 0,
$$
which yields $a_i({\bf{h}}) = d_i({\bf{h}}) = 0$, and $E_{i\bar{1}}=
\begin{bmatrix}
0 & b_i(\mathbf h)\\
0 & 0
\end{bmatrix}$, for all $i\in \mathbf m\setminus\{m\}.$

\np
By a similar argument, we deduce
$$
E_{\bar{1}i} =
\begin{bmatrix}
0 & 0 \\
e_i(\mathbf h) & 0
\end{bmatrix},
\qquad e_i(\mathbf h)\in \mathbb{C}[\mathbf h],\quad i\in \mathbf m\setminus\{m\}.
$$
\np
Since
$$
e_{i\bar{1}} \cdot e_{\bar{1}i} \cdot \mathbf {f}({\bf{h}}) +
e_{\bar{1}i} \cdot e_{i\bar{1}} \cdot \mathbf {f}({\bf{h}}) =
h_i\ \mathbf {f}({\bf{h}})
\quad \text{for all}\;\; i \in {\bf{m}} \setminus \{m\},\; \mathbf {f}({\bf{h}}) \in \C[\mathbf h]^{\oplus 2}, 
$$
we obtain
$$
\begin{bmatrix} 0 & b_i({\bf{h}}) \\ 0 & 0 \end{bmatrix}
\Delta_i^{-1} \left( \begin{bmatrix} 0 & 0 \\ e_i({\bf{h}}) & 0 \end{bmatrix} \right)
+
\begin{bmatrix} 0 & 0 \\ e_i({\bf{h}}) & 0 \end{bmatrix}
\Delta_i \left( \begin{bmatrix} 0 & b_i({\bf{h}}) \\ 0 & 0 \end{bmatrix} \right)
=h_i \I_2.
$$
Therefore,
$$
b_i({\bf{h}})\ \Delta_i^{-1}(e_i({\bf{h}})) =
e_i({\bf{h}})\ \Delta_i(b_i({\bf{h}})) =
h_i .
$$
\np
Consequently,
$$\bigl(b_i(\mathbf h),e_i(\mathbf h)\bigr)=\bigl(a_ih_i,\,a_i^{-1}\bigr)
\quad \text{or}\quad
\bigl(b_i(\mathbf h),e_i(\mathbf h)\bigr)=\bigl(a_i,\,a_i^{-1}h_i\bigr),
$$
for some $a_i\in\C^\times$. Equivalently,
\begin{equation} \label{matrix-odd2}
    E_{i\bar{1}}= \begin{bmatrix} 0 & a_ih_i \\ 0 & 0 \end{bmatrix},
    \quad
    E_{\bar{1}i} = \begin{bmatrix} 0 & 0 \\ a_i^{-1} & 0 \end{bmatrix}\;\;\; \text{or}\;\;\; E_{i\bar{1}}= \begin{bmatrix} 0 & a_i \\ 0 & 0 \end{bmatrix},
    \quad
    E_{\bar{1}i} = \begin{bmatrix} 0 & 0 \\ a_i^{-1}h_i & 0 \end{bmatrix}.
\end{equation}

\np
By \eqref{matrix-odd} and \eqref{matrix-odd2}, $M \simeq M({\bf a},\mathcal S) $, for some ${\bf a}=(a_1,\dots, a_m)\in(\C^\times)^m$ and $\mathcal S\subset{\bf m}$.
\end{proof}

\np
\begin{proposition} \label{isomthm(2)}
Let $\mathcal{S}_1, \mathcal{S}_2 \subset {\bf{m}}$ and ${\bf{a}}, {\bf{b}} \in (\mathbb{C}^\times)^m$.  
Then
$$
M({\bf a},\mathcal{S}_1)\;\simeq\; M({\bf b},\mathcal{S}_2)
\quad\Longleftrightarrow\quad
\mathcal{S}_1=\mathcal{S}_2 \ \text{ and }\ \exists\,\gamma\in\mathbb{C}^\times \ \text{such that}\ {\bf a}=\gamma\,{\bf b}.
$$
\end{proposition}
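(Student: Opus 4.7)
By Remark~\ref{sl(m|1)-conjugacy}, $M(\mathbf a,\mathcal S_1)\simeq M(\mathbf b,\mathcal S_2)$ if and only if their defining tuples are $\mathcal{M}_{\mathfrak{sl}(m|1)}(2)$-conjugate; concretely, there must exist $W(\mathbf h)=\begin{bmatrix} w_1 & w_2\\ w_3 & w_4\end{bmatrix}\in\GL_2(\C[\mathbf h])$ (so $\det W \in\C^\times$) satisfying, for each $i\in\mathbf m$,
$$W(\mathbf h)\,E'_{i\bar 1}=E_{i\bar 1}\,\Delta_i^{-1}(W(\mathbf h)),\qquad W(\mathbf h)\,E'_{\bar 1 i}=E_{\bar 1 i}\,\Delta_i(W(\mathbf h)),$$
where $E_{i\bar 1},E_{\bar 1 i}$ are the matrices for $M(\mathbf a,\mathcal S_1)$ and $E'_{i\bar 1},E'_{\bar 1 i}$ those for $M(\mathbf b,\mathcal S_2)$. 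The whole proof is an analysis of this system. For the backward direction, I will simply take $W:=\diag(1,\gamma^{-1})$ (a matrix of constants, so $\Delta_i^{\pm1}(W)=W$) and verify by a routine $2\times2$ computation that conjugation by $W$ rescales each upper-right entry by $\gamma^{-1}$ and each lower-left entry by $\gamma$, which converts the matrices of $M(\mathbf a,\mathcal S)$ into those of $M(\gamma^{-1}\mathbf a,\mathcal S)=M(\mathbf b,\mathcal S)$.

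For the forward direction, I would feed each of the four cases (depending on whether $i$ belongs to $\mathcal S_1$, $\mathcal S_2$, both, or neither) into the two conjugation identities and compute the resulting $2\times2$ matrix equations entry-by-entry. In every case the upper-left and lower-right entries of the two sides force
$$a_i\,p_i(\mathbf h)\,\Delta_i^{-1}(w_3)=0\quad\text{and}\quad a_i^{-1}\,q_i(\mathbf h)\,\Delta_i(w_2)=0,$$
where $p_i,q_i\in\{1,h_i\}$, and since $\Delta_i$ is an automorphism of $\C[\mathbf h]$ this immediately yields $w_2=w_3=0$. Then $\det W=w_1 w_4\in\C[\mathbf h]^{\times}=\C^{\times}$, so both $w_1$ and $w_4$ must be nonzero scalars.

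The key step is now to rule out the mixed cases. If $i\in\mathcal S_1\setminus\mathcal S_2$, combining the off-diagonal entries of the two conjugation equations produces
$$b_i\,w_1=a_i\,h_i\,\Delta_i^{-1}(w_4),$$
whose right-hand side is a scalar multiple of $h_i$ (as $w_4\in\C^\times$), while the left-hand side is a nonzero scalar—a contradiction. The symmetric computation excludes $i\in\mathcal S_2\setminus\mathcal S_1$, forcing $\mathcal S_1=\mathcal S_2$. In the remaining matching cases the same off-diagonal equation reduces, because $w_4$ is constant, to
$$a_i\,w_4=b_i\,w_1,$$
so the ratio $\gamma:=w_1/w_4\in\C^\times$ is independent of $i$ and satisfies $\mathbf a=\gamma\,\mathbf b$. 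The main obstacle is precisely this contradiction step: one must carefully track which entries carry the extra $h_i$ factor coming from whether $i$ lies in $\mathcal S_1$ or $\mathcal S_2$, and use that $\GL_2(\C[\mathbf h])$-invertibility collapses $w_1,w_4$ to constants, making the two forms incompatible.
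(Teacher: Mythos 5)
Your proposal is correct and follows essentially the same route as the paper: pass to the $\mathcal{M}_{\mathfrak{sl}(m|1)}(2)$-conjugation relation, use the diagonal entries of the intertwining identities to force $w_2=w_3=0$, invoke invertibility of $W$ to conclude $w_1,w_4\in\C^\times$, and then read off the contradiction (a nonzero constant equal to a multiple of $h_i$) in the mixed case and the common ratio $\gamma=a_i/b_i$ in the matching case. Your explicit $W=\diag(1,\gamma^{-1})$ for the converse is a welcome elaboration of the step the paper dismisses as ``straightforward.''
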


\begin{proof}
For $i\in{\bf m}$, denote by $E_{i\bar 1}^{({\bf a},\mathcal S_1)}, E_{\bar 1 i}^{({\bf a},\mathcal S_1)}$, 
the action-defining matrices of $M({\bf a},\mathcal S_1)$.
Similarly, write $
E_{i\bar 1}^{({\bf b},\mathcal S_2)}, E_{\bar 1 i}^{({\bf b},\mathcal S_2)}$, the action-defining matrices of $M({\bf b},\mathcal S_2)$. 

\np
The ``if'' direction is straightforward. For the ``only if'' direction, assume 
$M({\bf a},\mathcal S_1)\simeq M({\bf b},\mathcal S_2)$. 
We first show that this forces $\mathcal S_1=\mathcal S_2$. 
Suppose not; then, after relabelling if necessary, there exists $i\in \mathcal S_1\setminus \mathcal S_2$. Then,
$$E_{i\bar{1}}^{({\bf a},\mathcal S_1)} =
\begin{bmatrix} 0 & a_ih_i \\ 0 & 0 \end{bmatrix},\;\; E_{\bar{1}i}^{({\bf a},\mathcal S_1)} =
\begin{bmatrix} 0 & 0 \\ a_i^{-1} & 0 \end{bmatrix},\;\;
E_{i\bar{1}}^{({\bf b},\mathcal S_2)} =
\begin{bmatrix} 0 & b_i \\ 0 & 0 \end{bmatrix},\;\; E_{\bar{1}i}^{({\bf b},\mathcal S_2)} =
\begin{bmatrix} 0 & 0 \\ b_i^{-1}h_i & 0 \end{bmatrix}.$$

\np
Since $M({\bf a},\mathcal S_1)\;\simeq\; M({\bf b},\mathcal S_2)$, there exists
$$
W(\mathbf h)=
\begin{bmatrix}
w_1(\mathbf h) & w_2(\mathbf h)\\
w_3(\mathbf h) & w_4(\mathbf h)
\end{bmatrix}
\in \GL_2\big(\C[\mathbf h]\big)
$$
such that the following intertwining relations hold:
$$E_{i\bar 1}^{({\bf a},\mathcal S_1)}
=
W^{-1}(\mathbf h)\,E_{m\bar 1}^{({\bf b},\mathcal S_2)}\,
\Delta_i^{-1}\bigl(W(\mathbf h)\bigr),\quad E_{\bar 1 m}^{({\bf a},\mathcal S_1)}
=W^{-1}(\mathbf h)\,E_{\bar 1 m}^{({\bf b},\mathcal S_2)}\,
\Delta_i\bigl(W(\mathbf h)\bigr),$$
for all $i\in \mathbf m$. It follows that 
$$w_2({\bf{h}}) = w_3({\bf{h}}) = 0, \quad \text{and}\quad a_i h_i w_1({\bf{h}}) = b_i\ \Delta_i^{-1}\big(w_4({\bf{h}})\big).$$ 

\np
We obtain a contradiction since $w_1(h)w_4(h) \in \C^\times$. Therefore, 
$$M({\bf{a}},\mathcal{S}_1)\; \simeq \; M({\bf{b}},\mathcal{S}_2) \qquad\text{implies}\qquad \mathcal{S}_1 = \mathcal{S}_2.$$

\np
With $\mathcal{S}_1 = \mathcal{S}_2$, the above argument yields $w_1(\mathbf h),w_4(\mathbf h)\in\C^\times$, and $ a_i\,w_1=b_i\,w_4 $, for every $i\in{\bf m}$. Hence the ratio
$$
\gamma:=\frac{w_4}{w_1}=\frac{a_i}{b_i}\in\C^\times\quad\text{for all}\quad i\in{\bf m}.
$$
This completes the proof.
\end{proof}

\begin{proposition}\label{indecomM(a,S)}
For any ${\bf a}\in(\C^\times)^m$ and any subset $\mathcal S\subset{\bf m}$, the module $M({\bf a},\mathcal S)$ is indecomposable and of infinite length.
\end{proposition}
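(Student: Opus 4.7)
The strategy is to compute the endomorphism algebra $\mathrm{End}_{\mathcal U(\mathfrak{sl}(m|1))}(M(\mathbf a,\mathcal S))$ explicitly: indecomposability will follow because the only idempotents turn out to be $0$ and the identity, while infinite length will follow by taking the images of a sequence of non-invertible endomorphisms to produce an infinite strictly descending chain of submodules.

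For the endomorphism calculation, any $\Phi\in\mathrm{End}_{\mathcal U(\mathfrak{sl}(m|1))}(M(\mathbf a,\mathcal S))$ is given by left multiplication by some $W(\mathbf h)=\begin{bmatrix} w_1 & w_2\\ w_3 & w_4\end{bmatrix}\in\Mat_2(\C[\mathbf h])$ satisfying the intertwining conditions
$$
W(\mathbf h)\,E_{i\bar 1}=E_{i\bar 1}\,\Delta_i^{-1}(W(\mathbf h)),\qquad W(\mathbf h)\,E_{\bar 1 i}=E_{\bar 1 i}\,\Delta_i(W(\mathbf h)) \qquad (i\in\mathbf m),
$$
with $E_{i\bar 1},E_{\bar 1 i}$ taken from Theorem~\ref{classificationsl(m|1)}. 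Plugging in the explicit matrices and checking the two cases $i\in\mathcal S$ and $i\notin\mathcal S$ separately (in the same spirit as the proof of Proposition~\ref{isomthm(2)}) forces $w_2=w_3=0$ and $w_4=\Delta_i(w_1)$ for every $i\in\mathbf m$. The resulting equalities $\Delta_i(w_1)=\Delta_j(w_1)$ translate to invariance of $w_1$ under all integer translations by the vectors $e_i-e_j$, i.e.\ by the sublattice $\{v\in\Z^m:\sum_k v_k=0\}$. Since this sublattice is Zariski dense in the hyperplane $\sum v_k=0$, polynomials invariant under it depend only on $H:=h_1+\cdots+h_m$. Hence $w_1=p(H)$ for some $p\in\C[H]$, and then $w_4=\Delta_i(p(H))=p(H-(m-1))$ automatically for every $i$.

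Indecomposability is now immediate: an idempotent endomorphism $\Phi^2=\Phi$ yields $p(H)^2=p(H)$ in $\C[H]$, so $p\in\{0,1\}$, meaning $M(\mathbf a,\mathcal S)$ admits no non-trivial direct sum decomposition. For infinite length, for each $k\in\Z_{\geq 0}$ the choice $p(H)=H^k$ provides an endomorphism $W_k:=\mathrm{diag}\bigl(H^k,(H-(m-1))^k\bigr)$, whose image
$$
N_k\;:=\;H^k\,\C[\mathbf h]\ \oplus\ (H-(m-1))^k\,\C[\mathbf h]
$$
is a submodule of $M(\mathbf a,\mathcal S)$. Because $m\geq 2$, $H$ is a non-zero, non-unit irreducible element of the UFD $\C[\mathbf h]$, so $H^k\C[\mathbf h]\supsetneq H^{k+1}\C[\mathbf h]$ for every $k$. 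The chain
$$
M(\mathbf a,\mathcal S)=N_0\supsetneq N_1\supsetneq N_2\supsetneq\cdots
$$
is therefore infinite and strictly decreasing, so $M(\mathbf a,\mathcal S)$ has infinite length. The only real work is the computation of the centralizer in $\Mat_2(\C[\mathbf h])$; once $w_1=p(H)$ is pinned down, both conclusions are formal.
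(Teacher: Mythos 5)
Your proof is correct and follows essentially the same route as the paper: compute the centralizer of the action-defining matrices to see that every endomorphism is $\mathrm{diag}\bigl(p(H),p(H-(m-1))\bigr)$, deduce indecomposability from the absence of nontrivial idempotents, and exhibit an infinite strictly decreasing chain of submodules as images of the non-invertible endomorphisms. The only (welcome) difference is that you spell out, via density of the translation lattice $\{v\in\Z^m:\sum_k v_k=0\}$, why $w_1$ must be a polynomial in $H=h_1+\cdots+h_m$, a step the paper leaves implicit.
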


\begin{proof}
Let ${\bf a}\in(\C^\times)^m$ and $\mathcal S\subset{\bf m}$, we first prove that $M({\bf a},\mathcal S)$ is indecomposable. Let $\Phi \in 
\End_{\,\mathcal{U}(\mathfrak{sl}(m|1))}\bigl(M(\mathbf a,\mathcal S)\bigr)$. Then, there exists a matrix
$$
W_{\Phi}(\mathbf h)=
\begin{bmatrix}
w_1(\mathbf h) & w_2(\mathbf h)\\
w_3(\mathbf h) & w_4(\mathbf h)
\end{bmatrix}
\in \Mat_{2}\bigl(\C[\mathbf h]\bigr)
$$
such that $
\Phi\big(\mathbf {f}({\bf{h}})\big)
=
W_{\Phi}(\mathbf h)\,
\mathbf {f}({\bf{h}})
$, for all $\mathbf {f}({\bf{h}}) \in \C[\mathbf h]^{\oplus 2}$. Since
$$
\Phi\big(e_{i\bar{1}}\cdot \mathbf {f}({\bf{h}})\big)
   = e_{i\bar{1}}\cdot \Phi\big(\mathbf {f}({\bf{h}})\big),
\qquad
\Phi\big(e_{\bar{1}i}\cdot \mathbf {f}({\bf{h}})\big)
   = e_{\bar{1}i}\cdot \Phi\big(\mathbf {f}({\bf{h}})\big),
$$
for all  $i\in {\bf m}$, then $W_{\Phi}(\mathbf h)$ satisfies
$$W_{\Phi}({\bf{h}}) \begin{bmatrix} 0 & a_i h_i  \\ 0 & 0   \end{bmatrix} = \begin{bmatrix} 0 & a_i h_i \\ 0 & 0   \end{bmatrix}\ \Delta_i^{-1}\left( W_{\Phi}({\bf{h}})\right), \;\;\text{if}\;\; i \in \mathcal{S},$$
$$W_{\Phi}({\bf{h}})\begin{bmatrix} 0 & 0 \\ a_i^{-1} & 0   \end{bmatrix}= \begin{bmatrix} 0 & 0 \\ a_i^{-1} & 0   \end{bmatrix}\ \Delta_i\left( W_{\Phi}({\bf{h}})\right), \;\;\text{if}\;\; i \in \mathcal{S}, $$
$$W_{\Phi}({\bf{h}}) \begin{bmatrix} 0 & a_j \\ 0 & 0   \end{bmatrix} = \begin{bmatrix} 0 & a_j \\ 0 & 0   \end{bmatrix}\ \Delta_j^{-1}\left( W_{\Phi}({\bf{h}})\right), \;\;\text{if}\;\; j \not\in \mathcal{S},$$
$$W_{\Phi}({\bf{h}})\begin{bmatrix} 0 & 0 \\ a_j^{-1}h_j  & 0   \end{bmatrix}= \begin{bmatrix} 0 & 0 \\ a_j^{-1}h_j & 0   \end{bmatrix} \ \Delta_j\left( W_{\Phi}({\bf{h}})\right), \;\;\text{if}\;\; j \not\in \mathcal{S}. $$
\np
Consequently,
$$
W_{\Phi}(\mathbf h)
=\begin{bmatrix}
  w_1(\mathbf h) & 0\\
  0 & w_4(\mathbf h)
\end{bmatrix},
\qquad\text{where}\quad
w_1(\mathbf h)=\Delta_i^{-1}\big(w_4(\mathbf h)\big),
\quad
  \text{for all}\quad i\in \mathbf m.
$$

\np
Hence, there exists a polynomial $F(X)\in \C[X]$ such that
$$  W_{\Phi}(\mathbf h)
=\begin{bmatrix}
F\left(\sum_{j=1}^m h_j+m-1\right) & 0\\
0 & F\left(\sum_{j=1}^m h_j\right)
\end{bmatrix}$$

\np
Therefore, if $\Phi$ is an idempotent, then $\Phi = 0$ or $\Phi = \I_{2}$. So $M(\mathbf a,\mathcal S)$ is indecomposable.

\np 
 The preceding argument shows that every submodule $N\subseteq M(\mathbf a,\mathcal S)$ is of the form 
$$
N = M_F := 
F\left(\sum_{j=1}^m h_j+m-1\right) \mathbb{C}[{\bf{h}}]
\;\oplus\;
F\left(\sum_{j=1}^m h_j\right) \mathbb{C}[{\bf{h}}],
$$
for some $F(X)\in \C[X]$. Moreover, fixing an infinite sequence $\{\lambda_r\}_{r\geq 1}$ of complex numbers and setting
$$
F_0(X):=1, \qquad F_k(X):=\prod_{r=1}^k (X-\lambda_r),
$$
we obtain the filtration
$$
 \dots\; \subsetneq M_k \subsetneq \;\dots \;\subsetneq M_2 \subsetneq M_1 \subsetneq M_0 = M(\mathbf{a}, \mathcal{S}).
$$
where $M_k := 
\, F_k\left(\sum_{j=1}^m h_j+m-1\right)\ \mathbb{C}[{\bf{h}}]
\,\oplus\, 
 F_k\left(\sum_{j=1}^m h_j\right)\ \mathbb{C}[{\bf{h}}].$ 
\end{proof}

\subsection{Classification of objects in the categories $\mathcal{M}_{\mathfrak{sl}(m|1)}\bigl(1|1\bigr)$ and $\mathcal{M}^0_{\mathfrak{sl}(m|1)}\bigl(1|1\bigr)$ }
Each module $M({\bf a},\mathcal S)$ from Theorem~\ref{classificationsl(m|1)} carries a natural $\Z_2$-grading. Consequently, $M({\bf a},\mathcal S)$ is an object in both $\mathcal{M}_{\mathfrak{sl}(m|1)}\bigl(1|1\bigr)$ and $\mathcal{M}^0_{\mathfrak{sl}(m|1)}(1|1)$.

\np
\begin{definition}
Let ${\bf{a}} := (a_1, a_2, \dots, a_m) \in (\mathbb{C}^\times)^m$, $\mathcal{S} \subset {\bf{m}}$. Let $\overline{M({\bf{a}}, \mathcal{S})}$ to be the $\Z_2$-graded space $\C[{\bf{h}}] \oplus \C[{\bf{h}}]$, on which the $\mathcal{U}(\mathfrak{sl}(m|1))$ structure is given by the following action-defining matrices:
$$E_{i\bar{1}} = \begin{bmatrix} 0 & 0 \\ a_i h_i & 0  \end{bmatrix},\quad E_{\bar{1}m} = \begin{bmatrix} 0 & a_i^{-1} \\ 0 & 0  \end{bmatrix} \quad\text{if}\quad i \in \mathcal{S};$$
    $$E_{i\bar{1}} = \begin{bmatrix} 0 & 0 \\ 
   a_i & 0  \end{bmatrix} ,\quad E_{\bar{1}i} = \begin{bmatrix} 0 & a_i^{-1} h_i \\ 0 & 0  \end{bmatrix} \quad\text{if}\quad i \not\in \mathcal{S} .$$    
\end{definition}

\np
\begin{lemma} \label{simplifiedlem(1|1)}
For any ${\bf a}\in(\mathbb{C}^\times)^m$ and $\mathcal S \subset {\bf m}$, there is an isomorphism in
$\mathcal{M}_{\mathfrak{sl}(m|1)}(1|1)$:
$$
M({\bf{a}}, \mathcal{S}) \;\simeq\; \overline{M({\bf{a}}, \mathcal{S})}.
$$
\end{lemma}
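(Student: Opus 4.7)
The plan is to construct an explicit isomorphism between the two modules by writing down a single constant matrix $W$ that intertwines the two families of action matrices. Because the $\mathfrak h$-action is diagonal in both cases, a $\mathcal U(\mathfrak h)$-module homomorphism in $\mathcal{M}_{\mathfrak{sl}(m|1)}(1|1)$ is given by left multiplication by some $W({\bf h})\in \GL_2(\C[{\bf h}])$, and such a map is an $\mathfrak{sl}(m|1)$-module map precisely when it satisfies the intertwining identities
$$
W^{-1}({\bf h})\,E_{i\bar 1}^{M({\bf a},\mathcal S)}\,\Delta_i^{-1}(W({\bf h})) = E_{i\bar 1}^{\overline{M({\bf a},\mathcal S)}},\qquad W^{-1}({\bf h})\,E_{\bar 1 i}^{M({\bf a},\mathcal S)}\,\Delta_i(W({\bf h})) = E_{\bar 1 i}^{\overline{M({\bf a},\mathcal S)}}.
$$
Looking at the defining matrices of $M({\bf a},\mathcal S)$ and $\overline{M({\bf a},\mathcal S)}$, the nonzero entries are located in ``opposite'' off-diagonal positions. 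This suggests conjugating by the swap matrix, so I would try
$$
W \;=\; \begin{bmatrix} 0 & 1 \\ 1 & 0 \end{bmatrix},
$$
viewed as an \emph{odd} element of $\GL_2(\C[{\bf h}])$; this is allowed since the morphisms of $\mathcal{M}_{\mathfrak{sl}(m|1)}(1|1)$ are not required to be even.

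Since $W$ has constant entries, $\Delta_i^{\pm 1}(W)=W$ and $W^{-1}=W$, so the required intertwining identities collapse to $W\,E\,W = E'$ for each of the four matrices, which is simply the effect of swapping both the rows and columns. I would then carry out the four routine case checks ($i\in\mathcal S$ and $i\notin\mathcal S$, each for $E_{i\bar 1}$ and for $E_{\bar 1 i}$); for example, when $i\in\mathcal S$,
$$
W\begin{bmatrix} 0 & a_i h_i \\ 0 & 0 \end{bmatrix} W = \begin{bmatrix} 0 & 0 \\ a_i h_i & 0 \end{bmatrix},\qquad W\begin{bmatrix} 0 & 0 \\ a_i^{-1} & 0 \end{bmatrix} W = \begin{bmatrix} 0 & a_i^{-1} \\ 0 & 0 \end{bmatrix},
$$
and the case $i\notin\mathcal S$ is analogous. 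Since the generators $\{e_{i\bar 1},e_{\bar 1 i}\}_{i\in{\bf m}}$ generate $\mathfrak{sl}(m|1)$, conjugation by $W$ automatically intertwines the entire module structure, so $W$ defines the desired isomorphism $M({\bf a},\mathcal S)\simeq \overline{M({\bf a},\mathcal S)}$ in $\mathcal{M}_{\mathfrak{sl}(m|1)}(1|1)$.

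There is no real obstacle here: the whole content of the lemma is that the two grading conventions differ by an overall parity shift, and parity shift is an isomorphism in the non-parity-preserving category. The only subtlety worth flagging explicitly is that this isomorphism is odd, so it does \emph{not} live in $\mathcal{M}^0_{\mathfrak{sl}(m|1)}(1|1)$; this is consistent with the fact that $\overline{M({\bf a},\mathcal S)}$ and $M({\bf a},\mathcal S)$ will be genuinely different objects when one restricts to even morphisms.
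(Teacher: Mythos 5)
Your proposal takes essentially the same route as the paper: the isomorphism is implemented by a single odd constant matrix that swaps the two $\C[\mathbf h]$ summands, and since the matrix is constant the twists $\Delta_i^{\pm1}$ act trivially on it, reducing everything to a finite matrix check. The one discrepancy is a sign: the paper's convention for odd morphisms is the Koszul one (visible in the proof of Lemma~\ref{even-hom}, where the odd component $\Phi_{\bar 1}$ \emph{anticommutes} with the odd generators, $\Phi_{\bar 1}(e_{i\bar1}\cdot \mathbf f)=-e_{i\bar1}\cdot\Phi_{\bar 1}(\mathbf f)$), so the intertwining condition for your odd constant $W$ is $W E_{i\bar 1}=-E'_{i\bar 1}W$ rather than $W E_{i\bar 1}=E'_{i\bar 1}W$; the plain swap $\bigl[\begin{smallmatrix}0&1\\1&0\end{smallmatrix}\bigr]$ fails this by a sign, and the paper instead uses $\bigl[\begin{smallmatrix}0&-1\\1&0\end{smallmatrix}\bigr]$. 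With that one-character correction (or under the no-sign convention for odd module maps, which you implicitly adopted) your argument, including the remark that the isomorphism is odd and hence invisible in $\mathcal{M}^0_{\mathfrak{sl}(m|1)}(1|1)$, goes through verbatim.
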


\begin{proof}

The above isomorphism is given by the following odd map
  $$\Phi : M({\bf{a}}, \mathcal{S}) \;\to\; \overline{M({\bf{a}}, \mathcal{S})}$$
where 
$$\Phi\left(\begin{bmatrix} f_1({\bf{h}}) \\ f_2({\bf{h}}) \end{bmatrix} \right) = \begin{bmatrix} 0 & -1 \\ 1& 0 \end{bmatrix}\begin{bmatrix} f_1({\bf{h}}) \\ f_2({\bf{h}}) \end{bmatrix},\quad\text{for all}\quad f_i({\bf{h}}) \in \C[{\bf{h}}]. \qedhere $$
\end{proof}

\begin{remark} \label{nonisomorphic-paritypresev}
    It is clear that 
    $$
    M({\bf{a}}, \mathcal{S}_1) \;\not\simeq\; \overline{M({\bf{b}}, \mathcal{S}_2)} \quad \text{for all} \quad {\bf{a}}, {\bf{b}} \in (\mathbb{C}^\times)^m,\; \mathcal{S}_1, \mathcal{S}_2 \subset {\bf{m}},
    $$
    within the category $\mathcal{M}^0_{\mathfrak{sl}(m|1)}(1|1)$.
\end{remark}
\np
\begin{theorem} \label{classificationsl(m|1)-(1|1)-part1}
If $M \in \mathcal{M}_{\mathfrak{sl}(m|1)}\bigl(1|1\bigr)$, then $M \simeq M({\bf{a}}, \mathcal{S})$, for some ${\bf{a}} \in (\mathbb{C}^\times)^m$, $\mathcal{S} \subset {\bf{m}}$.
\end{theorem}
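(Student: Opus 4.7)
My plan is to repeat the proof of Theorem \ref{classificationsl(m|1)} while tracking the $\Z_2$-grading, ensuring that every conjugation is realized by a homogeneous morphism in $\mathcal{M}_{\mathfrak{sl}(m|1)}(1|1)$. By Remark \ref{structure(1|1)}, for $M \in \mathcal{M}_{\mathfrak{sl}(m|1)}(1|1)$ each $E_{i\bar 1}({\bf h})$ and $E_{\bar 1 i}({\bf h})$ is anti-diagonal in the adapted basis; since a $\Z_2$-graded morphism in $\mathcal{M}_{\mathfrak{sl}(m|1)}(1|1)$ is given by $W({\bf h}) \in \GL_2(\C[{\bf h}])$ that is either diagonal (an even morphism) or anti-diagonal (an odd morphism), it suffices to show that the conjugations in the proof of Theorem \ref{classificationsl(m|1)} can be chosen of these two shapes.

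The critical step is applying Corollary \ref{genneralizedsl(1|1)} to $(P,Q) := (E_{m\bar 1}, E_{\bar 1 m})$ with $P$ and $Q$ anti-diagonal. Specializing Lemma \ref{sl(m|1)reln} to this hypothesis, the vanishing of the diagonal entries of $P$ forces $\alpha = 0$ or $\beta = 0$, and similarly for $Q$; combined with $\gcd(\alpha,\beta) = \gcd(\gamma,\delta) = 1$, the surviving members are units in $\C[{\bf h}]$, hence nonzero scalars (normalizable to $1$). Among the four resulting sub-configurations of $(P, Q)$, only two are compatible with the defining equation having right-hand side $h_m \I_2$: one in which $P$ is upper-triangular and $Q$ is lower-triangular (the pair is already in the canonical form of Corollary \ref{genneralizedsl(1|1)}, so no conjugation is needed), and one in which $P$ is lower-triangular and $Q$ is upper-triangular (the conjugating matrix arising in the proof of Proposition \ref{solutionsl(m|1)} reduces to the constant anti-diagonal matrix $\begin{bmatrix} 0 & -1 \\ 1 & 0 \end{bmatrix}$). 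In either case the reduction is given by a homogeneous morphism in $\mathcal{M}_{\mathfrak{sl}(m|1)}(1|1)$.

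After this normalization, the remaining steps mirror the proof of Theorem \ref{classificationsl(m|1)}. For each $i \in {\bf m} \setminus \{m\}$, write $E_{i\bar 1} = \begin{bmatrix} 0 & b_i \\ c_i & 0 \end{bmatrix}$; the anti-commutation $\{e_{i\bar 1}, e_{m\bar 1}\} = 0$ forces $c_i = 0$, and the symmetric argument forces $E_{\bar 1 i}$ to be lower-triangular. The relation $\{e_{i\bar 1}, e_{\bar 1 i}\} = h_i$ then yields for each $i$ either $(b_i, s_i) = (a_i h_i, a_i^{-1})$ or $(a_i, a_i^{-1} h_i)$ for some $a_i \in \C^\times$, where $s_i$ denotes the bottom-left entry of $E_{\bar 1 i}$. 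Setting ${\bf a} := (a_1, \dots, a_m)$ and $\mathcal S := \{\, i \in {\bf m} : b_i = a_i h_i \,\}$ produces the desired isomorphism $M \simeq M({\bf a}, \mathcal S)$ in $\mathcal{M}_{\mathfrak{sl}(m|1)}(1|1)$.

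The main obstacle is the second step: performing the subcase analysis inside the proof of Proposition \ref{solutionsl(m|1)} and confirming that, when $P$ and $Q$ are anti-diagonal, the conjugating matrix $G({\bf h})$ is itself homogeneous. Once this is established, the rest is either a direct specialization of the argument of Theorem \ref{classificationsl(m|1)} or a routine application of the existing lemmas.
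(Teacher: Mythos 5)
Your proposal is correct and follows essentially the same route as the paper: both exploit the anti-diagonal form coming from Remark \ref{structure(1|1)}, force every $E_{i\bar 1}$ and $E_{\bar 1 i}$ to be strictly triangular of a consistent shape, read off the entries from $e_{i\bar 1}e_{\bar 1 i}+e_{\bar 1 i}e_{i\bar 1}=h_i$, and dispose of the mirror configuration by an odd isomorphism. The only (cosmetic) difference is that you apply the constant anti-diagonal swap at the start to normalize $(E_{m\bar 1},E_{\bar 1 m})$, whereas the paper carries both configurations to the end, identifying the second with $\overline{M({\bf a},\mathcal S)}$ and invoking Lemma \ref{simplifiedlem(1|1)}.
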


\begin{proof}
   Let $M \in \mathcal{M}_{\mathfrak{sl}(m|1)}\bigl(1|1\bigr)$. For $i\in \mathbf m$, let $E_{i\bar{1}}$ and $E_{\bar{1}i}$ be the action-defining matrices of $M$. By Lemma~\ref{structuresl(m|1)} and Remark~\ref{structure(1|1)}, for every $i\in \mathbf m$, we have
     $$E_{i\bar{1}} = \begin{bmatrix} 0 & a_{i\bar{1}}({\bf{h}}) \\ b_{i\bar{1}}({\bf{h}}) & 0 \end{bmatrix},\qquad E_{\bar{1}i} = \begin{bmatrix} 0 & a_{\bar{1}i}({\bf{h}}) \\ b_{\bar{1}i}({\bf{h}}) & 0 \end{bmatrix},$$
    with $a_{i\bar{1}}({\bf{h}}), a_{\bar{1}i}({\bf{h}}), b_{i\bar{1}}({\bf{h}}) , b_{\bar{1}i}({\bf{h}}) \in \C[{\bf{h}}]$. Since
$$ e_{i\bar{1}}\cdot e_{i\bar{1}} \cdot \mathbf {f}({\bf{h}})  \;=\; e_{\bar{1}i}\cdot e_{\bar{1}i} \cdot \mathbf {f}({\bf{h}}) \;=\; 0,\quad\text{for all}\quad \mathbf {f}({\bf{h}})\in\C[{\bf h}]^{\oplus 2},\; i\in{\bf m}, $$
it follows that
$$E_{i\bar{1}}\;\Delta_i^{-1}\bigl(E_{i\bar{1}}\bigr)=0,
\qquad
E_{\bar{1}i}\;\Delta_i\bigl(E_{\bar{1}i}\bigr)=0,\quad \text{for all}\;\; i\in \mathbf m,$$

\np
Hence, for all $i\in\mathbf m$,
$$
\begin{aligned}
& a_{i\bar1}(\mathbf h)\,b_{i\bar1}(\mathbf h)=0,
\qquad
\bigl(a_{i\bar1}(\mathbf h),\,b_{i\bar1}(\mathbf h)\bigr)\neq(0,0),\\[4pt]
& a_{\bar1 i}(\mathbf h)\,b_{\bar1 i}(\mathbf h)=0,
\qquad
\bigl(a_{\bar1 i}(\mathbf h),\,b_{\bar1 i}(\mathbf h)\bigr)\neq(0,0).
\end{aligned}
$$
\np
For distinct $i,j \in {\bf{m}}$, and all $\mathbf {f}({\bf{h}}) \in \C[{\bf{h}}]^{\oplus 2} $,
$$ e_{i\bar{1}}\cdot e_{j\bar{1}} \cdot \mathbf {f}({\bf{h}}) = -e_{j\bar{1}}\cdot e_{i\bar{1}} \cdot \mathbf {f}({\bf{h}}),\quad e_{\bar{1}i}\cdot e_{\bar{1}j} \cdot \mathbf {f}({\bf{h}}) = -e_{\bar{1}j}\cdot e_{\bar{1}i} \cdot \mathbf {f}({\bf{h}}).$$
\np
We deduce that, for each $i\in{\bf m}$,
$$E_{i\bar 1}=
\begin{bmatrix}0& a_{i\bar 1}(\mathbf h)\\0&0\end{bmatrix}
\;\;\text{or}\;\;E_{i\bar 1}=
\begin{bmatrix}0&0\\ b_{i\bar 1}(\mathbf h)&0\end{bmatrix}, \;and \;E_{\bar 1 i}=
\begin{bmatrix}0& a_{\bar 1 i}(\mathbf h)\\0&0\end{bmatrix}
\;\;\text{or}\;\;
E_{\bar 1 i}=
\begin{bmatrix}0&0\\ b_{\bar 1 i}(\mathbf h)&0\end{bmatrix}.$$
\np
Since,
$$
e_{i\bar{1}} \cdot e_{\bar{1}i} \cdot \mathbf {f}({\bf{h}}) +
e_{\bar{1}i} \cdot e_{i\bar{1}} \cdot \mathbf {f}({\bf{h}}) =
h_i \ \mathbf {f}({\bf{h}})
\quad \text{for all}\;\; i \in {\bf{m}},\ \mathbf f(\mathbf h)\in \C[\mathbf h]^{\oplus 2},
$$
we have the following two cases:
$$ E_{i\bar{1}} = \begin{bmatrix} 0 & a_{i\bar{1}}({\bf{h}}) \\ 0 & 0 \end{bmatrix}\quad \text{and} \quad  E_{\bar{1}i} = \begin{bmatrix} 0 & 0 \\ b_{\bar{1}i}({\bf{h}}) & 0 \end{bmatrix},\quad \text{for all} \quad i\in {\bf{m}},$$

or 
$$ E_{i\bar{1}} = \begin{bmatrix} 0 & 0 \\ b_{i\bar{1}}({\bf{h}}) & 0 \end{bmatrix}\quad \text{and} \quad  E_{\bar{1}i} =\begin{bmatrix} 0 & a_{\bar{1}i}({\bf{h}}) \\ 0 & 0 \end{bmatrix},\quad \text{for all} \quad i\in \bf{m}.$$
\np
If
$$
E_{i\bar{1}}=\begin{bmatrix}0& a_{i\bar{1}}(\mathbf h)\\0&0\end{bmatrix}
\quad\text{and}\quad
E_{\bar{1}i}=\begin{bmatrix}0&0\\ b_{\bar{1}i}(\mathbf h)&0\end{bmatrix}
\quad\text{for all}\quad i\in \mathbf m,
$$
then, proceeding as in the proof of Theorem~\ref{classificationsl(m|1)}, we deduce
$$
M \;\simeq\; M(\mathbf a,\mathcal S)
\quad\text{for some}\quad \mathbf a\in (\C^\times)^m,\ \mathcal S\subset \mathbf m.
$$

\np
If 
$$
E_{i\bar{1}}=\begin{bmatrix}0&0\\b_{i\bar{1}}(\mathbf h)&0\end{bmatrix}
\quad\text{and}\quad
E_{\bar{1}i}=\begin{bmatrix}0& a_{\bar{1}i}(\mathbf h)\\0&0\end{bmatrix}
\quad\text{for all}\;\; i\in \mathbf m,
$$
then an analogous argument yields
$$
M \;\simeq\; \overline{M(\mathbf a,\mathcal S)}
\quad\text{for some}\quad \mathbf a\in (\C^\times)^m,\ \mathcal S\subset \mathbf m.
$$
\np
By Lemma~\ref{simplifiedlem(1|1)}, the theorem follows.
\end{proof}

\begin{proposition}
    \label{classificationsl(m|1)-(1|1)-paritypres}
If $M \in \mathcal{M}^0_{\mathfrak{sl}(m|1)}\bigl(1|1\bigr)$, then 
$$M \;\simeq\; M({\bf{a}}, \mathcal{S})\qquad\text{or} \qquad M \simeq \overline{M({\bf{a}}, \mathcal{S})}, $$
for some ${\bf{a}}  \in (\mathbb{C}^\times)^m$, $\mathcal{S} \subset {\bf{m}}$.
\end{proposition}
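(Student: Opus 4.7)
The plan is to re-run the proof of Theorem~\ref{classificationsl(m|1)-(1|1)-part1} while tracking that every conjugation appearing in it can be realized by an \emph{even} (i.e.\ diagonal) matrix in $\GL_2(\C[\mathbf h])$. Since the objects of $\mathcal{M}^0_{\mathfrak{sl}(m|1)}(1|1)$ and $\mathcal{M}_{\mathfrak{sl}(m|1)}(1|1)$ coincide (only the morphisms differ), the entire opening analysis of the action‑defining matrices $E_{i\bar 1},E_{\bar 1 i}$ in Theorem~\ref{classificationsl(m|1)-(1|1)-part1} applies verbatim. It produces a strict dichotomy: either
$$
\text{(A)}\qquad E_{i\bar 1}=\begin{bmatrix}0 & a_{i\bar 1}(\mathbf h)\\ 0 & 0\end{bmatrix},\quad E_{\bar 1 i}=\begin{bmatrix}0 & 0\\ b_{\bar 1 i}(\mathbf h) & 0\end{bmatrix}\qquad\text{for all }i\in\mathbf m,
$$
or the transposed zero‑pattern (B). Pattern (A) is the template of $M(\mathbf a,\mathcal S)$ and pattern (B) is the template of $\overline{M(\mathbf a,\mathcal S)}$.

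Within each of these two patterns, the reduction to the standard form $M(\mathbf a,\mathcal S)$ (resp.\ $\overline{M(\mathbf a,\mathcal S)}$) proceeds exactly as in Theorem~\ref{classificationsl(m|1)}: a single application of Corollary~\ref{genneralizedsl(1|1)} normalises $(E_{m\bar 1},E_{\bar 1 m})$, and the remaining matrices $(E_{i\bar 1},E_{\bar 1 i})$, $i\neq m$, are then determined purely by the super‑(anti)commutation relations, with no further conjugation required. Thus the only step in which a conjugating matrix $W(\mathbf h)$ is introduced is the initial normalisation, and the whole proof reduces to showing that this particular $W(\mathbf h)$ may be taken diagonal.

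The main point—and the only mildly technical step—is therefore the following rigidity claim: if two tuples of pattern (A) are $\mathcal{M}_{\mathfrak{sl}(m|1)}(2)$‑conjugate via some $W(\mathbf h)=\begin{bmatrix}w_1 & w_2\\ w_3 & w_4\end{bmatrix}\in\GL_2(\C[\mathbf h])$, then $w_2=w_3=0$. I expect this to follow from a short direct expansion: writing out
$$W^{-1} E_{m\bar 1}\,\Delta_m^{-1}(W)=E'_{m\bar 1},\qquad W^{-1} E_{\bar 1 m}\,\Delta_m(W)=E'_{\bar 1 m},$$
and matching the two zero entries on each side produces the relations $w_3\,\Delta_m^{-1}(w_3)=0$ and $w_2\,\Delta_m(w_2)=0$; since $\C[\mathbf h]$ is an integral domain these force $w_2=w_3=0$. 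Hence $W$ is block‑diagonal, i.e.\ even, and the isomorphism $M\simeq M(\mathbf a,\mathcal S)$ lives in $\mathcal{M}^0_{\mathfrak{sl}(m|1)}(1|1)$. Case (B) is entirely analogous and yields $M\simeq\overline{M(\mathbf a,\mathcal S)}$ by an even isomorphism, completing the classification. (Remark~\ref{nonisomorphic-paritypresev} then guarantees that the two families are genuinely distinct in this category, consistent with the disjunction in the statement.)
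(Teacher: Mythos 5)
Your proposal is correct and follows essentially the same route as the paper, which simply re-runs the proof of Theorem~\ref{classificationsl(m|1)-(1|1)-part1} inside $\mathcal{M}^0_{\mathfrak{sl}(m|1)}(1|1)$ and invokes Remark~\ref{nonisomorphic-paritypresev}. Your added rigidity computation --- that any $W(\mathbf h)$ conjugating two tuples with the zero-pattern of $M(\mathbf a,\mathcal S)$ (resp.\ of $\overline{M(\mathbf a,\mathcal S)}$) must satisfy $w_2=w_3=0$ and hence be even --- is exactly the detail the paper leaves implicit, and your relations $w_3\,\Delta_m^{-1}(w_3)=0$ and $w_2\,\Delta_m(w_2)=0$ do come out of matching the zero entries, so the argument closes correctly.
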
 

\begin{proof}
The claim follows from the proof of Theorem~\ref{classificationsl(m|1)-(1|1)-part1}, together with Remark~\ref{nonisomorphic-paritypresev}.
\end{proof}

\np
\begin{lemma} \label{even-hom}
    Let ${\bf{a}}, {\bf{b}} \in (\mathbb{C}^\times)^m$, $\mathcal{S}_1, \mathcal{S}_2  \subset {\bf{m}}$. Then every homomorphism 
    $$
\Phi \colon M({\bf a},\mathcal{S}_1)\;\to \;M({\bf b},\mathcal{S}_2),
$$
in the category 
    $\mathcal{M}_{\mathfrak{sl}(m|1)}(1|1)$ is purely even. More precisely, there exist
$w_1(\mathbf h),w_4(\mathbf h)\in \C[\mathbf h]$ such that, for all $\mathbf {f}({\bf{h}}) \in \C[\mathbf h]^{\oplus 2}$,
$$
\Phi\left(\mathbf {f}({\bf{h}})\right)
=
\begin{bmatrix}
w_1(\mathbf h) & 0\\
0 & w_4(\mathbf h)
\end{bmatrix}\
\mathbf {f}({\bf{h}}),
$$
with $w_1({\bf h})=\Delta_i^{-1}\bigl(w_4({\bf h})\bigr)$ for all $i\in{\bf m}$.
\end{lemma}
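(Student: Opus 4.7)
The plan is to mimic the argument used in the proof of Proposition~\ref{indecomM(a,S)}, adapted to the setting of morphisms between the potentially distinct modules $M(\mathbf a,\mathcal S_1)$ and $M(\mathbf b,\mathcal S_2)$. Since morphisms in $\mathcal{M}_{\mathfrak{sl}(m|1)}(1|1)$ are $\Z_2$-graded, it suffices to show that $\Phi$ is represented, with respect to the standard $\C[\mathbf h]$-basis of $\C[\mathbf h]^{\oplus 2}$, by a \emph{diagonal} matrix
$$W_\Phi(\mathbf h) \;=\; \begin{bmatrix} w_1(\mathbf h) & w_2(\mathbf h) \\ w_3(\mathbf h) & w_4(\mathbf h) \end{bmatrix} \in \Mat_2\bigl(\C[\mathbf h]\bigr),$$
with $\Phi(\mathbf f(\mathbf h)) = W_\Phi(\mathbf h)\,\mathbf f(\mathbf h)$. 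The $\mathcal U(\mathfrak{sl}(m|1))$-linearity of $\Phi$ applied to the generators $e_{i\bar 1}, e_{\bar 1 i}$ translates, via Proposition~\ref{structuresl(m|1)}, into the intertwining identities
$$W_\Phi(\mathbf h)\,E_{i\bar 1}^{(\mathbf a,\mathcal S_1)} = E_{i\bar 1}^{(\mathbf b,\mathcal S_2)}\,\Delta_i^{-1}\bigl(W_\Phi(\mathbf h)\bigr), \qquad W_\Phi(\mathbf h)\,E_{\bar 1 i}^{(\mathbf a,\mathcal S_1)} = E_{\bar 1 i}^{(\mathbf b,\mathcal S_2)}\,\Delta_i\bigl(W_\Phi(\mathbf h)\bigr),$$
for all $i \in \mathbf m$, where $E_{i\bar 1}^{(\mathbf a,\mathcal S_1)}, E_{\bar 1 i}^{(\mathbf a,\mathcal S_1)}$ (and analogously for $\mathbf b,\mathcal S_2$) are the action-defining matrices from Theorem~\ref{classificationsl(m|1)}.

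The next step is to deduce $w_2 = w_3 = 0$ by a case analysis on whether $i \in \mathcal S_1$ and whether $i \in \mathcal S_2$. The essential observation is that, regardless of these choices, both $E_{i\bar 1}^{(\mathbf a,\mathcal S_1)}$ and $E_{i\bar 1}^{(\mathbf b,\mathcal S_2)}$ have the shape $\begin{bmatrix} 0 & * \\ 0 & 0 \end{bmatrix}$ with nonzero $(1,2)$-entry, while both $E_{\bar 1 i}^{(\mathbf a,\mathcal S_1)}$ and $E_{\bar 1 i}^{(\mathbf b,\mathcal S_2)}$ have the shape $\begin{bmatrix} 0 & 0 \\ * & 0 \end{bmatrix}$ with nonzero $(2,1)$-entry. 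Comparing the $(1,1)$-entries of the first identity yields an equation of the form $c\,\Delta_i^{-1}(w_3) = 0$ for some nonzero $c \in \C[\mathbf h]$, hence $w_3 = 0$; comparing the $(1,1)$-entries of the second identity similarly forces $w_2 = 0$. This shows that $W_\Phi$ is diagonal, i.e., $\Phi$ is purely even.

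Finally, with $w_2 = w_3 = 0$ in hand, the $(1,2)$-entries of the first intertwining identity (equivalently, the $(2,1)$-entries of the second) produce a relation of the shape $\lambda_i\, w_1(\mathbf h) = \mu_i\,\Delta_i^{-1}(w_4(\mathbf h))$, where $\lambda_i, \mu_i \in \C[\mathbf h]$ are explicit monomials in $h_i$ with scalar coefficients drawn from $\{a_i,a_i^{-1},b_i,b_i^{-1}\}$, depending on the memberships of $i$ in $\mathcal S_1$ and $\mathcal S_2$. Rearranging these identities delivers the claimed relation $w_1(\mathbf h) = \Delta_i^{-1}(w_4(\mathbf h))$ for every $i \in \mathbf m$. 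No step poses a conceptual obstacle: each of the four cases reduces to a routine $2 \times 2$ matrix computation that parallels exactly the one in the proof of Proposition~\ref{indecomM(a,S)}, and the only care required is the bookkeeping of factors of $h_i$ that appear when $i$ lies in the symmetric difference $\mathcal S_1 \bigtriangleup \mathcal S_2$.
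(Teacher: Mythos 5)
Your computation is mechanically sound and lands on the correct conclusion, but the step where you translate $\mathcal U(\mathfrak{sl}(m|1))$-linearity into the unsigned intertwining identities $W_\Phi\,E_{i\bar1}^{(\mathbf a,\mathcal S_1)}=E_{i\bar1}^{(\mathbf b,\mathcal S_2)}\,\Delta_i^{-1}(W_\Phi)$ is not justified for a general morphism of $\mathcal M_{\mathfrak{sl}(m|1)}(1|1)$ --- and that is exactly the point at issue. In this category a morphism decomposes as $\Phi=\Phi_{\bar0}+\Phi_{\bar1}$, and the paper's convention (made explicit in its own proof of Lemma \ref{even-hom}) is that the odd component obeys the Koszul sign rule $\Phi_{\bar1}(e_{i\bar1}\cdot\mathbf f)=-\,e_{i\bar1}\cdot\Phi_{\bar1}(\mathbf f)$. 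Hence the correct identity for the full $\Phi$ is $W_\Phi\,E_{i\bar1}^{(\mathbf a,\mathcal S_1)}=E_{i\bar1}^{(\mathbf b,\mathcal S_2)}\,\Delta_i^{-1}\bigl(W_{\Phi_{\bar0}}-W_{\Phi_{\bar1}}\bigr)$; writing the unsigned version for all of $\Phi$ amounts to assuming $W_{\Phi_{\bar1}}=0$, i.e., to assuming the conclusion. The repair is routine and preserves your entry-by-entry analysis: run it separately on $\Phi_{\bar0}$ (no sign) and on $\Phi_{\bar1}$ (with the extra minus sign); in either case the $(1,1)$-entries force the off-diagonal entries to vanish, so $\Phi_{\bar1}=0$. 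This decomposition into $\Phi_{\bar0}+\Phi_{\bar1}$ with the signed relations is precisely what the paper's proof records before leaving the matrix computation to the reader.

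A second, smaller issue concerns the final relation. The identities you actually obtain from the $(1,2)$- and $(2,1)$-entries have the form $a_i\,h_i^{\epsilon}\,w_1=b_i\,h_i^{\epsilon'}\,\Delta_i^{-1}(w_4)$ with $\epsilon,\epsilon'\in\{0,1\}$ governed by membership of $i$ in $\mathcal S_1$ and $\mathcal S_2$ (note $\Delta_i$ fixes $h_i$, so no shift appears). Rearranging gives $a_i\,w_1=b_i\,h_i^{\epsilon'-\epsilon}\,\Delta_i^{-1}(w_4)$, which coincides with the asserted $w_1=\Delta_i^{-1}(w_4)$ only when $a_i=b_i$ and $i\notin\mathcal S_1\triangle\mathcal S_2$, or when $\Phi=0$. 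For instance $\diag(2,1)$ defines a nonzero (even) isomorphism $M(\mathbf a,\mathcal S)\to M(2\mathbf a,\mathcal S)$ with $w_1=2\neq 1=\Delta_i^{-1}(w_4)$. You inherit this imprecision from the statement of the lemma itself, but your claim that the identity follows by ``rearranging'' should be replaced by the scalar- and $h_i$-twisted relation just displayed; that twisted relation is what the subsequent transfer of Propositions \ref{isomthm(2)} and \ref{indecomM(a,S)} to the graded category actually requires.
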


\begin{proof}
    Let  $\Phi \colon M({\bf a},\mathcal{S}_1)\;\to \;M({\bf b},\mathcal{S}_2)$ be a homomorphism in the category $\mathcal{M}_{\mathfrak{sl}(m|1)}(1|1)$. Then,
$$\Phi\left( \mathbf {f}({\bf{h}}) \right) \;=\; \begin{bmatrix}
w_1({\bf{h}}) & w_2({\bf{h}}) \\
w_3({\bf{h}}) & w_4({\bf{h}})
\end{bmatrix}\ \mathbf {f}({\bf{h}}),\quad \text{for all}\quad \mathbf {f}({\bf{h}}) \in \C[\mathbf h]^{\oplus 2},$$
where $w_i({\bf{h}}) \in \C[{\bf{h}}]$. Moreover,  $\Phi = \Phi_{\bar{0}} + \Phi_{\bar{1}}$ with
$$\Phi_{\bar{0}}\left(\mathbf {f}({\bf{h}})\right) = \begin{bmatrix}
w_1({\bf{h}}) & 0 \\
0 & w_4({\bf{h}})
\end{bmatrix}\ \mathbf {f}({\bf{h}}), \quad \Phi_{\bar{1}}\left(\mathbf {f}({\bf{h}})\right) = \begin{bmatrix}
0 & w_2({\bf{h}}) \\
w_3({\bf{h}}) & 0
\end{bmatrix}\ \mathbf {f}({\bf{h}}) .$$

\np
Since 
  $$\Phi\left(e_{i \bar{1}} \cdot\mathbf {f}({\bf{h}})\right) =\left(\Phi_{\bar{0}}+ \Phi_{\bar{1}}\right)\left(e_{i \bar{1}} \cdot\mathbf {f}({\bf{h}})\right)= e_{i\bar{1}} \cdot \Phi_{\bar{0}}\left(\mathbf {f}({\bf{h}})\right) - e_{i\bar{1}} \cdot \Phi_{\bar{1}}\left(\mathbf {f}({\bf{h}})\right),$$
  and
  $$\Phi\left(e_{\bar{1}i} \cdot\mathbf {f}({\bf{h}})\right) =\left(\Phi_{\bar{0}}+ \Phi_{\bar{1}}\right)\left(e_{ \bar{1}i} \cdot\mathbf {f}({\bf{h}})\right)= e_{\bar{1}i} \cdot \Phi_{\bar{0}}\left(\mathbf {f}({\bf{h}})\right) - e_{\bar{1}i} \cdot \Phi_{\bar{1}}\left(\mathbf {f}({\bf{h}})\right),$$
for all $\mathbf {f}({\bf{h}}) \in \C[\mathbf h]^{\oplus 2}$ and $i\in {\bf{m}}$, the lemma follows from direct computation.
\end{proof}

\np
\begin{remark}
From Lemma \ref{even-hom}, the statements of Propositions~\ref{isomthm(2)} and \ref{indecomM(a,S)} remain valid in the category $\mathcal{M}_{\mathfrak{sl}(m|1)}(1|1)$. The same proofs apply verbatim.
\end{remark}

 \section{The categories $\mathcal{M}_{\mathfrak{sl} (m|n)}(2)$ and $\mathcal{M}_{\mathfrak{sl} (m|n)}(1|1)$} \label{section5}
\np
In this section, we investigate the categories $\mathcal{M}_{\mathfrak{sl}(m|n)}(2)$ and $\mathcal{M}_{\mathfrak{sl} (m|n)}(1|1)$ with 
$m,n \geq 2$.  
Throughout, we fix $\C[\mathbf{h}] :=\C\!\bigl[h_{1},\dots,h_m,\,h_{\bar 1},\dots,h_{\overline{n-1}}\bigr].$ Note that one can extend Proposition~\ref{structuresl(m|1)} for objects of $\mathcal{M}_{\mathfrak{sl}(m|n)}(2)$. For our purposes, we record below only those actions needed for Theorem \ref{empty(m|n)}. For $i \in {\bf{m}}\setminus \{m\}$,
$$e_{i\bar{1}} \cdot \mathbf {f}({\bf{h}}) = E_{i\bar{1}}\ \sigma_i \sigma^{-1}_{\bar{1}}\left(\mathbf {f}({\bf{h}})\right),\quad e_{\bar{1}i} \cdot \mathbf {f}({\bf{h}}) = E_{\bar{1}i}\ \sigma_i^{-1} \sigma_{\bar{1}}\left(\mathbf {f}({\bf{h}})\right),$$
$$e_{i\bar{n}} \cdot \mathbf {f}({\bf{h}}) = E_{i\bar{1}}\ \Delta_i^{-1}\left(\mathbf {f}({\bf{h}})\right),\quad e_{\bar{n}i} \cdot \mathbf {f}({\bf{h}}) = E_{\bar{n}i}\ \Delta_i\left(\mathbf {f}({\bf{h}})\right),$$
$$e_{m\bar{1}} \cdot \mathbf {f}({\bf{h}}) = E_{m\bar{1}}\ \sigma_m \overline\Delta_{\bar{1}}\left(\mathbf {f}({\bf{h}})\right),\quad e_{\bar{1}m} \cdot \mathbf {f}({\bf{h}}) = E_{\bar{1}m}\  \sigma^{-1}_m \ \overline\Delta^{\,-1}_{\bar{1}}\left(\mathbf {f}({\bf{h}})\right) ,$$
$$e_{m\bar{n}} \cdot \mathbf {f}({\bf{h}}) = E_{m\bar{n}}\ \Delta^{-1}_m \overline\Delta\left(\mathbf {f}({\bf{h}})\right),\quad e_{\bar{n}m} \cdot \mathbf {f}({\bf{h}}) = E_{\bar{n}m}\  \Delta_m \overline\Delta^{\,-1}\left(\mathbf {f}({\bf{h}})\right) ,$$
where $\mathbf {f}({\bf{h}}) \in \C[\mathbf h]^{\oplus 2}$.
\begin{remark}
The notion of a $\mathcal{M}_{\mathfrak{sl}(m|1)}(2)$-conjugation extends to 
$\mathcal{M}_{\mathfrak{sl}(m|n)}(2)$. Namely, 
$$
\left(E_{i\bar{j}}, E_{\bar{j}i} \right)_{i \in {\bf{m}},\, \bar{j} \in {\bf{\bar{n}}}} \;
\sim_{\mathcal{M}_{\mathfrak{sl}(m|n)}(2)}\;
 \left(E'_{i\bar{j}}, E'_{\bar{j}i} \right)_{i \in {\bf{m}},\, \bar{j} \in {\bf{\bar{n}}}} 
$$
if and only if that there exists $W({\bf{h}}) \in \GL_2(\mathbb{C}[{\bf{h}}])$ satisfying relations similar to the ones in Definition
\ref{sl(m|1)_conjugation}.
An example of such a relation is
     $$E'_{m\bar{n}} = W^{-1}({\bf{h}})E_{m\bar{n}}\ \Delta^{-1}_m \overline\Delta \left(W({\bf{h}})\right),\qquad E'_{\bar{n}m} = W^{-1}({\bf{h}})E_{\bar{n}m}\ \Delta_m \overline\Delta^{\,-1} \left(W({\bf{h}})\right),$$
     see Section \ref{slbasis} for the definition of $\overline{\Delta}$.
 \end{remark}

\begin{lemma} \label{sl(m|n)}
    Let $P({\bf{h}}), Q({\bf{h}}) \in \Mat_2(\C[{\bf{h}}]) $ such that
    $$P({\bf{h}}) \ \Delta^{-1}_m \overline\Delta(P({\bf{h}})) \;=\; Q({\bf{h}}) \ \Delta_m \overline\Delta^{\,-1} (Q({\bf{h}})) \;=\; 0,$$
     $$P({\bf{h}}) \ \Delta^{-1}_m \overline\Delta (Q({\bf{h}})) +
    Q({\bf{h}}) \ \Delta_m \overline\Delta^{\,-1} (P({\bf{h}})) = h_m\I_2.$$
Then $(P({\bf{h}}), Q({\bf{h}}))$ are
$$\left( \begin{bmatrix} 0 & c h_m \\ 0 &  0 \end{bmatrix}, \begin{bmatrix} 0 & 0 \\ c^{-1} &  0 \end{bmatrix}  \right)~\text{or}~ \left( \begin{bmatrix} 0 & c \\ 0 &  0 \end{bmatrix}, \begin{bmatrix} 0 & 0 \\ c^{-1}h_m &  0 \end{bmatrix}  \right),\quad\text{for}\quad c\in \C^\times, $$
up to the twisted conjugation: $(P'({\bf{h}}), Q'({\bf{h}})) \sim (P({\bf{h}}), Q({\bf{h}}))$ if and only if
$$P'({\bf{h}}) = W^{-1}({\bf{h}})P({\bf{h}}) \ \Delta^{-1}_m \overline\Delta (W({\bf{h}}))  ,\qquad Q'({\bf{h}}) = W^{-1}({\bf{h}})Q({\bf{h}})\ \Delta_m \overline\Delta^{\,-1} (W({\bf{h}})),$$
for some $W({\bf{h}}) \in \GL_2(\C[{\bf{h}}])$.
    \end{lemma}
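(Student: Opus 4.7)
The plan is to treat this lemma as the $\mathfrak{sl}(m|n)$ analogue of Corollary~\ref{genneralizedsl(1|1)} and reduce it directly to Proposition~\ref{solutionsl(m|1)}. Set $\mathcal R := \mathbb C[h_m]$, a UFD in which $h_m$ is irreducible, and write $\mathbb C[\mathbf h] = \mathcal R\bigl[h_1,\dots,h_{m-1},h_{\bar 1},\dots,h_{\overline{n-1}}\bigr]$. The composite $T := \Delta_m\,\overline\Delta^{-1}$ is an algebra automorphism of $\mathbb C[\mathbf h]$ which fixes $\mathcal R$ pointwise, because each $\sigma_i$ with $i\neq m$ and each $\sigma_{\bar j}$ leaves $h_m$ invariant. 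With this notation the three hypotheses become
$$P\,T^{-1}(P) = Q\,T(Q) = 0,\qquad P\,T^{-1}(Q) + Q\,T(P) = h_m\,\I_2,$$
and the twisted conjugation $P' = W^{-1}P\,T^{-1}(W)$, $Q' = W^{-1}Q\,T(W)$ matches (via $W = G^{-1}$) the $\GL_2(\mathcal R[\mathbf h])$-action in Proposition~\ref{solutionsl(m|1)}.

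The arguments in Lemma~\ref{sl(m|1)reln} and Proposition~\ref{solutionsl(m|1)} invoke no property of $\Delta$ beyond its being an algebra automorphism of $\mathcal R[\mathbf h]$ restricting to the identity on $\mathcal R$; the remaining content is UFD factorization and straightforward linear algebra. Substituting $T$ for $\Delta$ and the irreducible element $h_m \in \mathcal R$ for $a$, the same proof therefore shows that $(P(\mathbf h), Q(\mathbf h))$ is conjugate to
$$\left(\begin{bmatrix} 0 & u(\mathbf h) \\ 0 & 0 \end{bmatrix},\; \begin{bmatrix} 0 & 0 \\ v(\mathbf h) & 0 \end{bmatrix}\right),\qquad T\bigl(u(\mathbf h)\bigr)\,v(\mathbf h) = h_m.$$

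Finally I solve $T(u)\,v = h_m$ in $\mathbb C[\mathbf h]$. Since $h_m$ is prime, exactly one of $T(u)$, $v$ is a unit. If $v = c^{-1}\in\mathbb C^\times$, then $T(u) = c\,h_m$, and applying $T^{-1}$ (which fixes both $\mathbb C$ and $h_m$) gives $u = c\,h_m$. If instead $T(u) = c\in\mathbb C^\times$, then $u = c$ and $v = c^{-1}h_m$. These are precisely the two normal forms in the statement. The only subtle step is verifying that each invocation of $\Delta$ in Proposition~\ref{solutionsl(m|1)} — the construction of the conjugator $G(\mathbf h)$, the case analysis, and the identification of the unit factor — indeed uses only its being an algebra automorphism fixing $\mathcal R$; this is a mechanical check rather than a genuine obstacle.
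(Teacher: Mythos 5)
Your proposal is correct and follows exactly the route the paper intends: the paper's proof of Lemma~\ref{sl(m|n)} simply says ``similar to the proof of Corollary~\ref{genneralizedsl(1|1)},'' which is itself the specialization $\mathcal R=\C[h_m]$ of Proposition~\ref{solutionsl(m|1)}, and your substitution of the automorphism $T=\Delta_m\overline\Delta^{\,-1}$ (which fixes $h_m$) for $\Delta$, followed by solving $T(u)v=h_m$, is precisely that argument spelled out. Your explicit check that the proofs of Lemma~\ref{sl(m|1)reln} and Proposition~\ref{solutionsl(m|1)} use only that the twist is an automorphism of $\mathcal R[\mathbf h]$ fixing $\mathcal R$ is a useful detail the paper leaves implicit.
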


    \begin{proof}
        Similar to the proof of Corollary \ref{genneralizedsl(1|1)}.
    \end{proof}
    
    \np
\begin{theorem} \label{empty(m|n)}
For any integers $m,n \geq 2$, the category $\mathcal{M}_{\mathfrak{sl}(m|n)}(2)$ is empty.
\end{theorem}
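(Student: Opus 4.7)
The plan is to suppose that some $M\in\mathcal{M}_{\mathfrak{sl}(m|n)}(2)$ exists and to derive a contradiction by showing that the odd root vector $e_{1\bar 1}$ must act as zero on $M$, which then conflicts with the Cartan relation satisfied by $\{e_{1\bar 1},e_{\bar 1 1}\}$.

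First I would apply Lemma~\ref{sl(m|n)} to the pair $(E_{m\bar n},E_{\bar n m})$ and absorb the scalar $c\in\C^\times$ by conjugating with the constant diagonal matrix $W=\diag(c,1)$ (which, being constant, is fixed by every shift operator). This reduces $(E_{m\bar n},E_{\bar n m})$ to one of the two normal forms
$$
\text{(A)}\ \ E_{m\bar n}=\begin{bmatrix}0 & h_m\\ 0 & 0\end{bmatrix},\ E_{\bar n m}=\begin{bmatrix}0 & 0\\ 1 & 0\end{bmatrix};\qquad \text{(B)}\ \ E_{m\bar n}=\begin{bmatrix}0 & 1\\ 0 & 0\end{bmatrix},\ E_{\bar n m}=\begin{bmatrix}0 & 0\\ h_m & 0\end{bmatrix}.
$$

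Since $m,n\geq 2$, we have $1\neq m$ and $\bar 1\neq \bar n$, so $e_{1\bar 1}$ is a genuine odd root vector and a direct matrix calculation gives $e_{m\bar n}e_{1\bar 1}=e_{1\bar 1}e_{m\bar n}=e_{\bar n m}e_{1\bar 1}=e_{1\bar 1}e_{\bar n m}=0$, whence $\{e_{m\bar n},e_{1\bar 1}\}=\{e_{\bar n m},e_{1\bar 1}\}=0$ in $\mathfrak{sl}(m|n)$. Next I would translate these two identities into matrix equations using the actions listed just before Lemma~\ref{sl(m|n)}; since both normalized matrices are fixed by $\sigma_1\sigma_{\bar 1}^{-1}$ (they contain no $h_1$ or $h_{\bar 1}$), a short case-by-case computation in (A) and (B) forces the two off-diagonal entries of $E_{1\bar 1}$ to vanish, so $E_{1\bar 1}$ is diagonal. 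Then $e_{1\bar 1}^2=0$ in $\mathcal U(\mathfrak{sl}(m|n))$ yields $E_{1\bar 1}\,\sigma_1\sigma_{\bar 1}^{-1}(E_{1\bar 1})=0$, and since $\sigma_1\sigma_{\bar 1}^{-1}$ is an automorphism of the integral domain $\C[\mathbf h]$, each diagonal entry must vanish; hence $E_{1\bar 1}=0$.

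A direct computation in the Cartan of $\mathfrak{sl}(m|n)$ shows $e_{11}+e_{\bar 1\bar 1}=h_1-h_m+h_{\bar 1}$, so the relation $\{e_{1\bar 1},e_{\bar 1 1}\}=e_{11}+e_{\bar 1\bar 1}$ translates into $\{e_{1\bar 1},e_{\bar 1 1}\}\cdot \mathbf f=(h_1-h_m+h_{\bar 1})\mathbf f$ for every $\mathbf f\in\C[\mathbf h]^{\oplus 2}$. But with $E_{1\bar 1}=0$ both summands of the left-hand side vanish, while the right-hand side is the nonzero polynomial $h_1-h_m+h_{\bar 1}$ times $\mathbf f$; this is the desired contradiction. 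The main obstacle will be the forced diagonality step: the two vanishing supercommutators must be expanded as matrix identities involving the shift operators $\sigma_1\sigma_{\bar 1}^{-1}$, $\Delta_m^{-1}\overline\Delta$, and $\Delta_m\overline\Delta^{-1}$, and in each of the normal forms (A) and (B) one must verify entry-by-entry that the off-diagonal entries of $E_{1\bar 1}$ vanish. Once that is done, the rest of the argument is immediate.
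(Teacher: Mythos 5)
Your proof is correct, and it takes a genuinely different route from the paper's. Both arguments begin identically, by normalizing $(E_{m\bar n},E_{\bar n m})$ via Lemma~\ref{sl(m|n)} (and the scalar $c$ is indeed absorbed by a constant diagonal conjugation, which commutes with all the shift operators). From there the paper proceeds as in Theorem~\ref{classificationsl(m|1)}: it pins down every pair $(E_{i\bar 1},E_{\bar 1 i})$ and $(E_{i\bar n},E_{\bar n i})$ up to a binary choice and a scalar, resolves those choices using $e_{im}=e_{i\bar j}e_{\bar j m}+e_{\bar j m}e_{i\bar j}$ for $\bar j=\bar 1,\bar n$, and finally computes $E_{mi}$ in two ways, obtaining incompatible diagonal matrices. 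You instead exploit the orthogonality, available exactly when $m,n\geq 2$, of the odd pairs $(e_{m\bar n},e_{\bar n m})$ and $(e_{1\bar 1},e_{\bar 1 1})$: I verified that in each normal form the relation $\{e_{1\bar 1},e_{m\bar n}\}=0$ kills the $(2,1)$ entry of $E_{1\bar 1}$ and $\{e_{1\bar 1},e_{\bar n m}\}=0$ kills the $(1,2)$ entry (the needed fact being that $h_m$ is fixed by $\sigma_1\sigma_{\bar 1}^{-1}$), after which $e_{1\bar 1}^2=0$ forces $E_{1\bar 1}=0$ because $\sigma_1\sigma_{\bar 1}^{-1}$ is an automorphism of the integral domain $\C[\mathbf h]$, and the Cartan relation $\{e_{1\bar 1},e_{\bar 1 1}\}=e_{11}+e_{\bar 1\bar 1}=h_1+h_{\bar 1}-h_m$ gives the contradiction. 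Your route is shorter, isolates the real obstruction (two mutually anticommuting copies of $\mathfrak{sl}(1|1)$ cannot both act on a module free of rank $2$ over $\mathcal U(\mathfrak h)$), and makes transparent why $n=1$ is exceptional, since there $\bar 1=\bar n$ and the orthogonality fails; what the paper's longer computation buys is an explicit near-classification of the candidate actions parallel to the $\mathfrak{sl}(m|1)$ case. The only point worth making explicit in a write-up is that the normalizing conjugation is a change of the $\C[\mathbf h]$-basis, so it replaces $E_{1\bar 1}$ by a twisted conjugate but preserves the shape of the action formula $e_{1\bar 1}\cdot\mathbf f=E_{1\bar 1}\,\sigma_1\sigma_{\bar 1}^{-1}(\mathbf f)$, so the subsequent computation is legitimate.
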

\begin{proof}
Assume that there exists $M\in\mathcal{M}_{\mathfrak{sl}(m|n)}(2)$, then $M = \C[\mathbf h]^{\oplus 2}$ with $\bigl(E_{i\bar j},E_{\bar j i}\bigr)_{i\in{\bf m},\,\bar j\in{\bf\bar n}}$ are action-defining matrices of $M$. Since 
   $$e_{m\bar{n}} \cdot e_{m\bar{n}} \cdot \mathbf {f}({\bf{h}}) = e_{\bar{n}m}\cdot e_{\bar{n}m} \cdot \mathbf {f}({\bf{h}})=0, \quad e_{m\bar{n}} \cdot e_{\bar{n}m} \cdot \mathbf {f}({\bf{h}}) + e_{\bar{n}m} \cdot e_{m\bar{n}} \cdot \mathbf {f}({\bf{h}}) = h_m\cdot\mathbf {f}({\bf{h}}), $$
 for all $\mathbf {f}({\bf{h}}) \in \C[\mathbf h]^{\oplus 2}$, we obtain the following relations
$$E_{m\bar{n}}({\bf{h}}) \ \Delta^{-1}_m \overline\Delta(E_{m\bar{n}}({\bf{h}})) = E_{\bar{n}m}({\bf{h}}) \ \Delta_m \overline\Delta^{\,-1} (E_{\bar{n}m}({\bf{h}})) = 0,$$
$$E_{m\bar{n}}({\bf{h}})  \ \Delta^{-1}_m \overline\Delta (E_{\bar{n}m}({\bf{h}})) 
    +E_{\bar{n}m}({\bf{h}}) \ \Delta_m \overline\Delta^{\,-1} (E_{m\bar{n}}({\bf{h}})) = h_m\I_2. $$
 By Lemma~\ref{sl(m|n)}, up to $\mathcal{M}_{\mathfrak{sl}(m|n)}(2)$-conjugation, we deduce:
$$E_{m\bar{n}} = \begin{bmatrix} 0 & \alpha_{m\bar{n}} h_m \\ 0 & 0  \end{bmatrix},\; E_{\bar{n}m} = 
\begin{bmatrix} 0 & 0 \\ \alpha_{m\bar{n}}^{-1} & 0 \end{bmatrix},\quad \text{or} \quad E_{m\bar{n}} = \begin{bmatrix} 0 & \alpha_{m\bar{n}} \\ 0 & 0  \end{bmatrix},\;\; E_{\bar{n}m} = \begin{bmatrix} 0 & 0\\ \alpha_{m\bar{n}}^{-1} h_m & 0 \end{bmatrix},$$
where $\alpha_{m\bar{n}} \in \C^{\times}$. By the same argument as in Theorem~\ref{classificationsl(m|1)}, we deduce
$$E_{i\bar{n}} = \begin{bmatrix} 0 & \alpha_{i\bar{n}} h_i \\ 0 & 0  \end{bmatrix},\; E_{\bar{n}i} = 
\begin{bmatrix} 0 & 0 \\ \alpha_{i\bar{n}}^{-1} & 0 \end{bmatrix},\quad \text{or} \quad E_{i\bar{n}} = \begin{bmatrix} 0 & \alpha_{i\bar{n}} \\ 0 & 0  \end{bmatrix},\;\; E_{\bar{n}i} = \begin{bmatrix} 0 & 0\\ \alpha_{i\bar{n}}^{-1} h_i & 0 \end{bmatrix},$$
$$E_{m\bar{1}} = \begin{bmatrix} 0 & \alpha_{m\bar{1}} h_{\bar{1}} \\ 0 & 0  \end{bmatrix},\; E_{\bar{1}m} = 
\begin{bmatrix} 0 & 0 \\ \alpha_{m\bar{1}}^{-1} & 0 \end{bmatrix},\quad \text{or} \quad E_{m\bar{1}} = \begin{bmatrix} 0 & \alpha_{m\bar{1}} \\ 0 & 0  \end{bmatrix},\;\; E_{\bar{1}m} = \begin{bmatrix} 0 & 0\\ \alpha_{m\bar{1}}^{-1} h_{\bar{1}} & 0 \end{bmatrix},$$
and
$$E_{i\bar{1}} = \begin{bmatrix} 0 & \alpha_{i\bar{1}} (h_i+h_{\bar{1}} -h_m) \\ 0 & 0  \end{bmatrix},\qquad E_{\bar{1}i} = 
\begin{bmatrix} 0 & 0 \\ \alpha_{i\bar{1}}^{-1} & 0 \end{bmatrix},$$
$$\text{or} \quad E_{i\bar{1}} = \begin{bmatrix} 0 & \alpha_{i\bar{1}} \\ 0 & 0  \end{bmatrix},\qquad E_{\bar{1}i} = \begin{bmatrix} 0 & 0\\ \alpha_{i\bar{1}}^{-1} (h_i+h_{\bar{1}} -h_m) & 0 \end{bmatrix},$$
where $i \in \mathbf{m} \setminus\{m\}$, $\alpha_{i\bar{1}}, \alpha_{m\bar{1}}, \alpha_{i\bar{n}}, \alpha_{m\bar{n}} \in \C^\times$. Since
 $$e_{im} \cdot \mathbf {f}({\bf{h}}) = e_{i\bar{j}} \cdot e_{\bar{j}m}\cdot \mathbf {f}({\bf{h}}) + e_{\bar{j}m} \cdot e_{i\bar{j}} \cdot \mathbf {f}({\bf{h}}),\quad \text{for all}\quad \mathbf {f}({\bf{h}}) \in \C[\mathbf h]^{\oplus 2}, \bar{j} \in \bar{\mathbf n}.$$
We deduce
$$E_{im} = E_{i\bar{1}}\ \sigma_i \sigma^{-1}_{\bar{1}}\left(E_{\bar{1}m}\right) + E_{\bar{1}m}\  \sigma^{-1}_m \ \overline\Delta^{\,-1}_{\bar{1}} \left(E_{i\bar{j}} \right)= E_{i\bar{n}}\ \Delta_i^{-1}\left(E_{\bar{n}m} \right) + E_{\bar{n}m}\ \Delta_m \overline\Delta^{\,-1}\left(E_{i\bar{n}}\right).$$
\np
Hence
$$
E_{im}
=\frac{\alpha_{i\bar 1}}{\alpha_{m\bar 1}}\;\I_2
=\frac{\alpha_{i\bar n}}{\alpha_{m\bar n}}\;\I_2,
$$
and,
$$
E_{i\bar{1}} =
\begin{bmatrix}
0 & \alpha_{i\bar{1}} \\
0 & 0
\end{bmatrix}, \quad
E_{i\bar{n}} =
\begin{bmatrix}
0 & \alpha_{i\bar{n}} \\
0 & 0
\end{bmatrix}, \quad
E_{\bar{1}m} =
\begin{bmatrix}
0 & 0 \\
\alpha_{m\bar{1}}^{-1} & 0
\end{bmatrix}, \quad
E_{\bar{n}m} =
\begin{bmatrix}
0 & 0 \\
\alpha_{m\bar{n}}^{-1} & 0
\end{bmatrix}.
$$
\np
Therefore,
$$ E_{\bar{1}i} = \begin{bmatrix} 0 & 0 \\ \alpha_{i\bar{1}}^{-1}(h_i+h_{\bar{1}}-h_{m}) & 0  \end{bmatrix},\qquad E_{\bar{n}i} = \begin{bmatrix} 0 & 0 \\ \alpha_{i\bar{n}}^{-1}h_i & 0  \end{bmatrix},    $$
$$E_{m\bar{1}} = \begin{bmatrix} 0 & \alpha_{m\bar{1}}h_{\bar{1}} \\ 0 & 0  \end{bmatrix}, \qquad E_{m\bar{n}} = \begin{bmatrix} 0 & \alpha_{m\bar{n}}h_{m} \\ 0 & 0  \end{bmatrix} .$$
Since 
$$e_{mi} \cdot \mathbf {f}({\bf{h}}) = 
e_{m\bar{j}} \cdot e_{\bar{j}i} \cdot \mathbf {f}({\bf{h}}) 
+ e_{\bar{j}i} \cdot e_{m\bar{j}} \cdot \mathbf {f}({\bf{h}}), \quad\text{for all}\quad \mathbf {f}({\bf{h}})\in \C[\mathbf h]^{\oplus 2},\bar{j} \in \bar{\mathbf n}.$$ 
it follows that
\begin{multline*}
  E_{mi} = E_{m\bar{1}}\ \sigma_m \overline\Delta_{\bar{1}}\left(E_{\bar{1}i} \right) + E_{\bar{1}i}\ \sigma^{-1}_i\sigma_{\bar{1}} \left(E_{m\bar{1}} \right)\\ =\frac{\alpha_{m\bar{1}}}{\alpha_{i\bar{1}}}\begin{bmatrix} h_{\bar{1}}(h_i+h_{\bar{1}}-h_{m}+1) & 0 \\ 0 & (h_{\bar{1}}-1)(h_i+h_{\bar{1}}-h_{m})
\end{bmatrix}  
\end{multline*}
and
$$E_{mi} = E_{m\bar{n}}\ \Delta^{-1}_m \overline\Delta\left(E_{\bar{n}i}\right) + E_{\bar{n}i}\ \Delta_i\left(E_{m\bar{n}}\right) = \frac{\alpha_{m\bar{n}}}{\alpha_{i\bar{n}}} \begin{bmatrix} (h_i+1)h_{m} & 0 \\ 0 & h_i(h_{m}-1)
\end{bmatrix}$$

\np
This yields a contradiction, and the theorem follows.
\end{proof}

\begin{remark}
For all integers $m,n\geq 2$, the categories $\mathcal{M}_{\mathfrak{sl}(m|n)}(1|1)$ and $\mathcal{M}^0_{\mathfrak{sl}(m|n)}(1|1)$ are also empty.
\end{remark}

\np
\textbf{Acknowledgment.} We would like to thank Charles Paquette and David Wehlau for many valuable discussions and insightful suggestions. All authors were partially supported by the Natural Sciences and Engineering Research Council of Canada.

\end{document}